\date{}
\newcommand{\be}{\begin{equation}}
\newcommand{\ee}{\end{equation}}
\def\la{\langle}
\def\ra{\rangle}
\def\R{\mathbb{R}}
\def\C{\mathbb{C}}
\def\N{\mathbb{N}}
\renewcommand{\Re}{\text{{\rm Re}\;}}
\renewcommand{\Im}{\text{{\rm Im}\;}}
\newtheorem{theorem}{Theorem}[section]
\newtheorem{lemma}[theorem]{Lemma}
\newtheorem{proposition}[theorem]{Proposition}
\newtheorem{corollary}[theorem]{Corollary}
\theoremstyle{definition}
\newtheorem{remark}[theorem]{Remark}
\numberwithin{equation}{section}
\title[Molecular dynamics for predissociation]{Estimates on the molecular dynamics for the predissociation process}
\author[Philippe BRIET \& 
Andr\'e MARTINEZ]{
Philippe BRIET${}^1$ \& 
Andr\'e MARTINEZ$ {}^2$
 }
\begin{document}

\maketitle 
\addtocounter{footnote}{1}
\footnotetext{{\tt\small  Aix-Marseille Universit\'{e}, CNRS, CPT UMR 7332, 13288 Marseille, France, and Universit\'{e} de Toulon, CNRS, CPT UMR 7332, 83957 La Garde, France,  
briet@univ-tln.fr} }  
\addtocounter{footnote}{1}
\footnotetext{{\tt\small Universit\`a di Bologna,  
Dipartimento di Matematica, Piazza di Porta San Donato, 40127
Bologna, Italy, 
andre.martinez@unibo.it }}  
\begin{abstract}
We study the survival probability associated with a semiclassical matrix Schr\"odinger operator that models the predissociation of a general molecule in the Born-Oppenheimer approximation. We show that it is given by its usual time-dependent exponential contribution, up to a reminder term that is small in the semiclassical parameter and for which we find the main contribution. The result applies in any dimension, and in presence of a number of resonances that may tend to infinity as the semiclassical parameter tends to 0.
\end{abstract}  
\vskip 4cm
{\it Keywords:} Resonances; Born-Oppenheimer approximation; eigenvalue crossing; quantum evolution; survival probability.
\vskip 0.5cm
{\it Subject classifications:} 35P15; 35C20; 35S99; 47A75.

\baselineskip = 18pt 
\vfill\eject
\section{Introduction}

The molecular predissociation is  one of most well  known quantum phenomena giving rise to  metastable states and resonances. This corresponds    when  a  bound state molecule
dissociates to the continuum through tunneling  see e.g. \cite{Kr, La, St, Ze}.
The rigorous description of this phenomena goes back  to \cite{Kl} with further developments in \cite{DuMe} and, more recently, in \cite{GrMa}.

In the context of  the Born-Oppenheimer approximation, the transition   can occur  when a confining electronic   curve near  a given energy  $E$ (e.g. $E$ is  a local minimum)  crosses a dissociative electronic  level (that is, a  curve having a   limit smaller than $E$  at infinity). Such a situation occurs for instance in the $SH$ molecule : see \cite{LeSu}. 

After reduction to an effective Hamiltonian, this phenomena can be described by a $2\times 2$ matrix $H$ of semiclassical pseudodifferential operators (see, e.g., \cite{KMSW, MaSo}), with small parameter $h$ corresponding to the square root of the inverse of the mass of the nuclei, and with principal part that is diagonal and consists of two Schr\"odinger operators. 

In this paper we consider predissociation resonances   from a dynamical point of view, i.e. in terms of exponential behavior in time  of the  quantum evolution $e^{-itH}$
associated with that system. 

Our main motivation is the recent series of works around the case where $H=H_0+\kappa V$ is the perturbation of an operator with an embedded eigenvalue: See, e.g., \cite{CGH, CoSo, JeNe, Her, Hu2} and references therein. In all of these papers, denoting by $\varphi$ the corresponding eigenfunction of $H_0$, the survival probability $\la e^{-itH}\varphi, \varphi \ra$ is studied. Roughly speaking, they show that the embedded eigenvalue gives rise to a resonance $\rho$, and  the previous quantity behaves like $e^{-it\rho} \|\varphi\|^2$ with an error-term typically ${\mathcal O}(\kappa^2)$. Moreover, inserting a cutoff in energy, the error-term has  a polynomial decay in time at infinity.

The starting point of our work is the following observation: in the case of the molecular predissociation, $H$ can be seen as a perturbation of a matrix Schr\"odinger operator admitting embedded eigenvalues. Therefore, a similar procedure can be done in order to study the quantum evolution. However, in contrast with the case $H=H_0+\kappa V$, the small parameter is involved in the unperturbed operator, too, making very delicate the extension of the methods used for it. In order to overcome this difficulty, we use the definition of resonances based of complex distortion (see, e.g., \cite{Hu1}), and we replace the arguments of regular perturbation theory (used, e.g., in \cite{CGH}) by those of semiclassical microlocal analysis.  

In this way, we can essentially generalize the previous results, and in the case of an isolated resonance $\rho$ with a gap $a(h)>>h^2$, our result takes the form,
$$
\la e^{-itH}g(H)\varphi, \varphi\ra =e^{-it\rho}b(\varphi, h) +{\mathcal O}\left(  \frac{h^2}{a(h)} \min_{0\leq k \leq \nu }\{ \frac{1}{[(1+t)a(h)]^{k}}\}\|\varphi\|^2\right),
$$
where  $b(\varphi,h)$ is the residue at $\rho$ of $z\mapsto \la (z-H)^{-1}\varphi, \varphi\ra$, and $\nu\geq 0$ depends on the regularity of the energy cutoff $g$ (see Theorem \ref{mainth} for a more complete result with several resonances). In addition, we also have an expression for the main contribution of the remainder term (see Remark \ref{remr0}). In the case where $\nu$ can be taken positive, this also leads to the fact that the error term remains negligible up to times of order $Ch^{-1}|\Im\rho|^{-1}$ with $C>0$, $C\sim\nu$ as $\nu\to\infty$, that is much beyond the life-time of the resonant state (see Remark \ref{rembeyondlt}).

Our results must also be compared with that of \cite{NSZ}, where a polynomial bound is obtained for the rest in the quantum evolution, in the case of a scalar semiclassical Schr\"odinger operator.

Let us briefly describe the content of the paper. In the next section, we give a precise description of the model and assumptions. Section \ref{secRes} is devoted to the definition of resonances by means of complex distortion theory. Our main result is given in Section \ref{secmainth}, whose proof is spread over Sections \ref{secprelim} to \ref{secRest}. Section \ref{secCor} contains the proof of a corollary where the energy cutoff has been removed and we discuss in Section \ref{casNT} the non-trapping case. Finally, some examples of application are given in Section \ref{secexample}, and the Appendix contains the proof of some technical results.

\section{Assumptions}

We consider the semiclassical $2\times 2$ matrix Schr\"odinger operator,
\be
\label{operator}
H= H_0+ h{\mathcal W}(x,hD_x)=
\left(\begin{array}{cc}
P_1 & 0\\
0 & P_2
\end{array}\right) + h{\mathcal W}(x,hD_x)
\ee
on the Hilbert space ${\mathcal H}:=L^2(\R^n)\oplus L^2(\R^n)$, with,
$$
P_j := -h^2\Delta +V_j(x) \quad (j=1,2),
$$
where $x=(x_1,\dots ,x_n)$ is the current variable in $\R^n$ ($n\geq 1$),
$h>0$ denotes the semiclassical parameter, and 
$$
{\mathcal W}(x,hD_x)=
\left(\begin{array}{cc}
0 & W\\
W^* & 0
\end{array}\right)
$$
with $W=w(x,hD_x)$ is a first-order
semiclassical pseudodifferential operators, in the sense that, for all $\alpha\in \N^{2n}$, $\partial^\alpha w(x,\xi) ={\mathcal O}(1+|\xi|)$ uniformly on $\R^{2n}$.

This is typically the kind of operator one obtains in the Born-Oppenheimer approximation, after reduction to an effective Hamiltonian (see \cite{KMSW, MaSo}). 

We assume,

{\bf Assumption 1.} { \it The potentials $V_1$ and $V_2$ are smooth and 
bounded on $\R^n$,   and
satisfy,
\begin{eqnarray}
\label{assV1}
&&\mbox{The set }\, U:=\{V_1\leq 0\} \mbox{ is bounded } ;\\
&& \liminf_{\vert x\vert\rightarrow\infty} V_1 >0;\\
&&V_2 \; \mbox{has a strictly negative limit } -\Gamma \mbox{ as } \; \vert x\vert \rightarrow \infty;\\
&& V_2 >0 \mbox{ on } U.
\end{eqnarray}
}

In particular, $H$ with domain ${\mathcal D}_H:=H^2(\R^n)\oplus H^2(\R^n)$ is selfadjoint.

Since we have to consider the resonances of $H$ near the energy level $E=0$, we also assume,

{\bf Assumption 2.}\label{Ass2} {\it The potentials $V_1$ and $V_2$ extend to bounded holomorphic functions near a complex sector of the form,
${\mathcal S}_{R_0,\delta} :=\{x\in \C^n\, ;\, |\Re x|\geq R_0\, ,\,  \vert \Im x\vert \leq \delta |\Re x| \}$, with $R_0,\delta >0$. Moreover  $V_2$ tends to its limit at $\infty$ in this sector and $\Re V_1 $ stays away from $0$ in this sector.}

{\bf Assumption 3.} \label{Ass3}{\it The  symbol $w(x,\xi)$ of $W$ extends to a holomorphic functions  in $(x,\xi)$ near,
$$
\widetilde {\mathcal S}_{R_0,\delta}:= {\mathcal S}_{R_0,\delta}\times \{ \xi\in \C^n\,; |\Im\xi| \leq \delta\la \Re x \ra\},
$$
and, for real $x$, $w$ is a smooth function of $x$ with values in the set of holomorphic functions of $\xi$ near $\{ |\Im\xi|\leq \delta\}$.
Moreover, we assume that, for any $\alpha\in \N^{2n}$,  it satisfies
\be
\label{assR}
\partial^\alpha w(x,\xi) ={\mathcal O}(\la \Re \xi\ra)\,\, \mbox{ uniformly on } \widetilde {\mathcal S}_{R_0,\delta}\, \cup\, \left(\R^n\times \{ |\Im\xi|\leq\delta\}\right).
\ee }

Under the previous assumption we plan to study the quantum evolution of the operator $P$ given in (\ref{operator}), where ${\mathcal W}$ is defined as
$$
{\mathcal W}:= \left( \begin{array}{cc}
0&{\rm Op}^L_h (w)\\
{\rm Op}^R_h (\overline{w})& 0
\end{array} \right)
$$
where for any symbol $a(x,\xi)$ we use the following quantizations,
\begin{eqnarray*}
&& {\rm Op}^L_h (a)u(x) = \frac{1}{(2 \pi h)^n} \int e^{i(x-y)\xi/h} a(x, \xi) u(y) dy d\xi;\\
&&{\rm Op}^R_h (a)u(x) = \frac{1}{(2 \pi h)^n} \int e^{i(x-y)\xi/h} a(y, \xi) u(y) dy d\xi.
\end{eqnarray*}

Finally, we assume,

{\bf Assumption [V]} (Virial condition) \label{viriel}
$$
2V_2(x)+x\nabla V_2(x)<0\,  \mbox{ on } \{V_2\leq 0\},
$$
or, more generally,

{\bf Assumption [NT]}
$$
E=0 \mbox{ is a non-trapping energy for } V_2.
$$

The fact that 0 is a non-trapping energy for $V_2$ means that, for any $(x,\xi)\in p_2^{-1}(0)$, one has $|\exp tH_{p_2}(x,\xi)|\rightarrow +\infty$ as $t\rightarrow \infty$, where $p_2(x,\xi):=\xi^2+V_2(x)$ is the symbol of $P_2$, and $H_{p_2}:=(\nabla_\xi p_2, -\nabla_x p_2)$ is the Hamilton field of $p_2$. It is equivalent to the existence of a function $G\in C^\infty (\R^{2n};\R)$ supported near $\{ p_2=0\}$ (where $p_2(x,\xi):=\xi^2 + V_2(x)$), and satisfying,
\be
\label{fuite}
H_{p_2}G(x,\xi)>0 \,  \mbox{ on } \{p_2=0\}.
\ee
Note that Assumption [V] is nothing but (\ref{fuite}) with $G(x,\xi) =x\cdot \xi$. Moreover, thanks to Assumption 2, we see that this condition is automatically satisfied for $|x|$ large enough.

\section{Resonances}
\label{secRes}

In the previous situation, the essential spectrum of $H_0$ is $[-\Gamma, +\infty)$.
The resonances of $H$ can be defined by using a complex distortion in the following way: Let $f(x) \in C^\infty (\R^n, \R^n)$ such that $f(x) = 0$ for $\vert x \vert \leq R_0$, $f(x) = x$ for $\vert x\vert$ large enough. For $\theta\not=0$ small enough, we define the distorted operator $H_{\theta}$ as the value at $ \nu = i \theta$ of  the extension to the complex of the  operator  $U_\nu H U_\nu^{-1}$ which is  defined for $\nu$ real, and analytic in $\nu$ for $\nu$ small enough, where we have set,
\be
\label{distorsion}
U_\nu \phi(x) := \det ( 1 + \nu df(x))^{1/2} \phi ( x + \nu f(x)).
\ee 
Since we have a pseudodifferential operator $w(x, hD_x)$, the fact that  $U_\nu H U_\nu^{-1}$ is analytic in $\nu$ is not completely standard but can be done without problem (thanks to Assumption 3). By using the Weyl Perturbation Theorem, one can also see  that there exists $\varepsilon_0 >0$ such that for any $\pm\theta >0$ small enough, the spectrum of $H_\theta$ is discrete in $\{ z\in\C\, ;\, \Re z\in [ -\varepsilon_0, \varepsilon_0 ] ,\, \pm \Im z\geq \mp \varepsilon_0 \theta\}$, and contained in $\{\pm\Im z\leq 0\}$. When $\theta$ is positive, the eigenvalues of $H_\theta$ are called the resonances of $H$ \cite{Hu1, HeSj2, HeMa}, they form a set denoted by ${\rm Res} (H)$  (on the contrary, when $\theta<0$, the eigenvalues of $H_\theta$ are just the complex conjugates of the resonances of $H$, and are called anti-resonances). 

Let us observe that the resonances of $H$ can also be viewed as the poles of the meromorphic extension, from $\{\Im z >0\}$, of some matrix elements of the resolvent $R(z):=(H-z)^{-1}$ (see, e.g., \cite{ReSi, HeMa}).

By adapting techniques of \cite{HeSj1, HeSj2} (see also \cite{Kl, GrMa}), one can prove that, in our situation, the resonances of $H$ near 0 are close to the eigenvalues of the operator
\be
\label{puitsbouche}
\widetilde H:= 
\left(\begin{array}{cc}
-h^2\Delta + V_1 & 0\\
0 & -h^2\Delta + \widetilde V_2
\end{array}\right) + h{\mathcal W}(x,hD_x),
\ee
where $ \widetilde V_2\in C^\infty (\R^n; \R)$ coincides with $V_2$ in $\{ V_2\geq \delta\}$ ($\delta >0$ is fixed arbitrarily small), and is such that $\inf \widetilde V_2 >0$. The precise statement is the following one : Let $I(h)$ be a closed interval included in $(-\varepsilon_0, \varepsilon_0)$, and $a(h)>0$ such that $a(h)\to 0$ as $h\to 0_+$, and, for all $\varepsilon >0$ there exists $C_\varepsilon >0$ satisfying,
\be
a(h)\geq \frac1{C_\varepsilon} e^{-\varepsilon /h};
\ee
\be
\label{gap1}
\sigma (\widetilde H)\cap \left( (I(h)+[-3a(h), 3a(h)])\backslash I(h)\right) =\emptyset,
\ee
for all $h>0$ small enough. Then, there exist two constants $\varepsilon_1, C_0>0$ and a bijection,
$$
\widetilde\beta \, : \, \sigma (\widetilde H)\cap I(h)\,  \to \, {\rm Res} (H)\cap \Omega (h),
$$
where we have set,
$$
\Omega (h):= (I(h)+[-a(h), a(h))+i[-\varepsilon_1, 0],
$$
such that,
$$
\widetilde\beta (\lambda) -\lambda ={\mathcal O}(e^{-C_0/h}),
$$
uniformly as $h\to 0_+$.

In particular, since the eigenvalues of $\widetilde P$ are real, one obtains that, for any $\varepsilon >0$, the resonances $\rho$ in $\Omega (h)$ satisfy,
$$
\Im \rho ={\mathcal O}(e^{-C_0/h}).
$$

In what follows, we will show that, under an additional assumption, these resonances are also closed to the eigenvalues of $P_1$.

\begin{remark}\sl
Actually, under an assumption of analyticity on $W$ slightly stronger that Assumption 3 (see \cite{GrMa}), or if $W$ has a simpler form (see \cite{Kl}), $C_0$ can be taken arbitrarily close to $2d(U, \{ V_2\leq 0\})$, where $d$ stands for the Agmon distance associated with the potential  $\min(V_2,V_1)$, that is, the pseudo-distance associated with the pseudo-metric $\max (0, \min(V_2,V_1))dx^2$ .
\end{remark}

\section{Main Result}
\label{secmainth}
For our purpose, we need to have a stronger gap between $I(h)$ and the rest of the spectrum of $P_1$. Namely, we assume the existence of $a(h)>0$, such that,
\be
\label{gap2}
\begin{aligned}
& \frac{h^2}{a(h)} \to 0\, \mbox{ as } \, h\to 0_+;\\
& \sigma (P_1)\cap \left( (I(h)+[-3a(h), 3a(h)])\backslash I(h)\right) =\emptyset,
\end{aligned}
\ee

\label{secres}
Then, we denote by $u_1,\dots,u_m$ an orthonormal basis of eigenfunctions of $P_1$ corresponding to its eigenvalues $\lambda_1,\dots,\lambda_m$ in $I(h)$ (we recall that $m=m(h) ={\mathcal O}(h^{-n})$). For $j=1,\dots,m$, we also set,
$$
\phi_j:=\left(\begin{array}{c}
u_j\\
0
\end{array}\right) \in L^2(\R^n)\oplus L^2(\R^n),
$$
so that $\phi_j$ is an eigenvector of,
$$
H_0:=\left(\begin{array}{cc}
-h^2\Delta + V_1 & 0\\
0 & -h^2\Delta + V_2
\end{array}\right),
$$
with eigenvalue $\lambda_j$ imbedded in its continuous spectrum $[\Gamma, +\infty)$.

\begin{theorem}\sl
\label{mainth}
Suppose Assumptions 1-3, (\ref{gap2}), and Assumption [V] or [NT]. Let $g\in L^\infty (\R)$ supported in $(I(h)+(-2a(h), 2a(h)))$ with $g=1$ on $I(h)+[-a(h),a(h)]$, and such that, for some $\nu \geq 0$, one has,
\be
\begin{aligned}
& g, g', \dots, g^{(\nu)} \in L^\infty (R);\\
& g^{(k)} ={\mathcal O}(a(h)^{-k})\quad (k=1,\dots,\nu).
\end{aligned}
\ee
 Then, for all $t\in\R$ and $\varphi \in {\rm Span}\{ \phi_1,\dots,\phi_m\}$, one has,
\be
\label{mainres}
\la e^{-itH}g(H)\varphi, \varphi\ra =\sum_{j=1}^m e^{-it\rho_j}b_j(\varphi, h) + r(t,\varphi,h),
\ee
where $\rho_1,\dots,\rho_m$ are the resonances of $H$ lying in $\Omega(h):=I(h)+[-a(h),a(h)] -i[0,\varepsilon_1]$,  and satisfy,
\be
\label{estres}
\rho_j =\lambda_j +{\mathcal O}(h^2),
\ee
$r(t,\varphi,h)$ is such that,  
\be
\label{estrestth}
r(t,\varphi,h)={\mathcal O}\left(  \frac{h^2}{a(h)} \min_{0\leq k \leq \nu }\{ \frac{1}{[(1+t)a(h)]^{k}}\}\|\varphi\|^2\right),
\ee
uniformly with respect to $h>0$ small enough, $t\geq 0$, and $\varphi\in{\rm Span}(\phi_1,\dots\phi_m)$.
Here  $b_j(\varphi,h)$ is the residue at $\rho_j$ of the meromorphic extension from $\{\Im z>0\}$ of the function,
$$
z\mapsto \la (z-H)^{-1}\varphi, \varphi\ra.
$$
and satisfies: There exists a $m\times m$ matrix $M(z)$ depending analytically on $z\in \Omega(h)$, with
\be
\label{estM}
\| M(z)\| ={\mathcal O}(h^2),
\ee
such that,
\be
\label{aj=residu}
\begin{aligned}
& b_j(\varphi,h) \mbox{  is the residue at } \rho_j \mbox{ of the meromorphic function}\\
&z \mapsto \la (z-\Lambda +M(z))^{-1}\alpha_\varphi, \alpha_\varphi\ra_{\C^m},
\end{aligned}
\ee
where $\alpha_\varphi :=(\la \varphi, \phi_1\ra,\dots,\la \varphi, \phi_m\ra)$ and $\Lambda:={\rm diag}(\lambda_1,\dots,\lambda_m)$.

If in addition one assumes that $\lambda_1,\dots,\lambda_m$ are all simple, and the gap $\widetilde a(h):= \min_{j\not=k}|\lambda_j -\lambda_k|$ is such that,
\be
\label{gaplambda}
h^2/\widetilde a(h)\to 0\, \mbox{ as }\, h\to 0_+,
\ee
then, $b_j(\varphi,h)$ satisfies,
\be
\label{resutres}
b_j(\varphi ,h)=|\la \varphi, \phi_j\ra|^2 +{\mathcal O}\left( (h^2 + h^4(a\widetilde a)^{-1})\|\varphi\|^2\right),
\ee
uniformly with respect to $h>0$ small enough and $\varphi\in{\rm Span}(\phi_1,\dots\phi_m)$.
\end{theorem}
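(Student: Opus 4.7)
The plan is as follows. Starting from the spectral representation
\[
\la e^{-itH}g(H)\varphi,\varphi\ra = \frac{1}{\pi}\int_\R e^{-it\lambda}g(\lambda)\,\Im\la(\lambda+i0-H)^{-1}\varphi,\varphi\ra\,d\lambda,
\]
I would interpret the integrand through the distorted operator $H_\theta$ of Section \ref{secRes}. Since the eigenfunctions $u_j$ of $P_1$ decay exponentially off $U$ by Agmon estimates, the vectors $\phi_j$ (and hence $\varphi$) are distortion-analytic, and therefore $\la(z-H_\theta)^{-1}\varphi,\varphi\ra$ provides the meromorphic extension of $\la(z-H)^{-1}\varphi,\varphi\ra$ from $\{\Im z>0\}$ across $I(h)$ into $\Omega(h)$, with poles exactly at the resonances in $\Omega(h)$.

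Next I would apply a Feshbach--Grushin reduction with respect to the rank-$m$ projector $\Pi:=\sum_{j=1}^m |\phi_j\ra\la\phi_j|$. The key algebraic observation is that the off-diagonal structure of $\mathcal{W}$ together with $\phi_j=(u_j,0)^T$ forces $\Pi\mathcal{W}\Pi=0$. Setting $H''_\theta:=\Pi^\perp H_\theta\Pi^\perp$, the Schur complement then gives, for $z\in\Omega(h)$,
\[
\Pi(z-H_\theta)^{-1}\Pi = \Pi(z-\Lambda+M(z))^{-1}\Pi,\qquad M(z):=h^2\,\Pi\mathcal{W}\Pi^\perp(z-H''_\theta)^{-1}\Pi^\perp\mathcal{W}\Pi,
\]
modulo small corrections from the fact that $\phi_j$ is only an approximate eigenvector of the distorted $H_0$. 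The bound $\|M(z)\|={\mathcal O}(h^2)$ on $\Omega(h)$ follows from the spectral gap (\ref{gap2}) on the $P_1$-part of $\Pi^\perp$, the non-trapping (or virial) semiclassical resolvent estimate on the distorted $P_2$-block, and the Agmon decay of $u_j$ into the classically forbidden region $\{V_2>0\}$ which provides the extra factor of $h$ absorbing the $h^{-1}$ coming from the non-trapping resolvent. A Rouch\'e argument then identifies the poles of $(z-\Lambda+M(z))^{-1}$ in $\Omega(h)$ with the resonances $\rho_j$ (consistent with the bijection $\widetilde\beta$ of Section \ref{secRes}), yields (\ref{estres}), and by definition of residue establishes (\ref{aj=residu}).

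With this structure in hand, I would deform the integration contour in the spectral representation into the lower half-plane of $\Omega(h)$, down to $\Im z=-c\,a(h)$, and close it off $\supp g$. The residue theorem produces the contribution $\sum_j e^{-it\rho_j}b_j(\varphi,h)$, and the remaining contour integral is $r(t,\varphi,h)$. On the deformed contour one has $(z-\Lambda+M(z))^{-1}={\mathcal O}(a(h)^{-1})$ and $M(z)={\mathcal O}(h^2)$; the $h^2/a(h)$ prefactor in (\ref{estrestth}) then comes from the resolvent identity $(z-\Lambda+M(z))^{-1}-(z-\Lambda)^{-1}=-(z-\Lambda+M(z))^{-1}M(z)(z-\Lambda)^{-1}$, the $(z-\Lambda)^{-1}$ piece being absorbed into the residues $b_j$. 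Performing $k\le\nu$ integrations by parts in $\lambda$, and using $g^{(k)}={\mathcal O}(a(h)^{-k})$ together with the fact that each derivative of the deformed resolvent costs $a(h)^{-1}$ while each derivative of $e^{-it\lambda}$ yields a factor $t$, produces the announced $(1+t)^{-k}$ decay.

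For the refinement (\ref{resutres}) under (\ref{gaplambda}), I would expand $(z-\Lambda+M(z))^{-1}=(z-\Lambda)^{-1}\sum_{\ell\ge 0}\bigl(-M(z)(z-\Lambda)^{-1}\bigr)^\ell$ near $z=\rho_j$. The leading $(z-\Lambda)^{-1}$ has a simple pole at $\lambda_j$ with residue the canonical projector onto $e_j\in\C^m$, which after shifting the pole from $\lambda_j$ to $\rho_j=\lambda_j+{\mathcal O}(h^2)$ yields $|\la\varphi,\phi_j\ra|^2$ up to ${\mathcal O}(h^2\|\varphi\|^2)$; each further Neumann term involves an off-diagonal entry of $M(z)$ (of order $h^2$) paired with a free resolvent of norm $1/\widetilde a(h)$, producing the extra ${\mathcal O}(h^4/(a\widetilde a))$ correction. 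The main technical obstacle I anticipate throughout the argument is the uniform control of $(z-H''_\theta)^{-1}$ and of the whole residue calculus when $m=m(h)$ may grow like $h^{-n}$, since every bound must hold in operator norm on an $m$-dimensional spectral subspace rather than for a single isolated resonance.
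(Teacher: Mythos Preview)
Your overall architecture---Stone formula, distortion, Feshbach/Grushin reduction, contour deformation---matches the paper's. However, two of your justifications would not deliver the stated bounds.

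First, your mechanism for $\|M(z)\|={\mathcal O}(h^2)$ is off. There is no $h^{-1}$ to absorb: under Assumption~[V] the distorted resolvent satisfies $\|(P_2^\theta-z)^{-1}\|={\mathcal O}(1)$ uniformly in $h$ (see~(\ref{ellP2theta'})), and under [NT] the Helffer--Sj\"ostrand space gives the analogous bound ${\mathcal O}(1/\theta)$ with $\theta$ fixed. The estimate $M={\mathcal O}(h^2)$ then comes straight from the explicit $h^2$ prefactor, together with an almost-orthogonality argument (Appendix~2 in the paper) to control the matrix norm when $m\sim h^{-n}$. Agmon decay of the $u_j$ is used there, but only to establish quasi-orthogonality of $(u_j^{\pm\theta})$, not to gain a power of $h$ against the $P_2$-resolvent. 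More importantly, the Grushin inversion itself requires the operator $K(z)=h^2 W_\theta^*\widehat R_1^\theta\, W_\theta R_2^\theta$ to be $o(1)$ in norm, and for this one genuinely needs $R_2^\theta={\mathcal O}(1)$ as an operator; an Agmon argument on the specific vectors $W^*u_j$ would not suffice.

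Second, your Neumann-series computation of $b_j$ yields a weaker error than (\ref{resutres}). On a contour of radius $\widetilde a/2$ around $\lambda_j$, the first-order term $-(z-\Lambda)^{-1}M(z)(z-\Lambda)^{-1}$ already contributes off-diagonal residues of size $M_{kj}(\lambda_j)/(\lambda_j-\lambda_k)={\mathcal O}(h^2/\widetilde a)$, not ${\mathcal O}(h^2)$. The paper avoids this by splitting $M=M_0+M_1$ with $\|M_1\|={\mathcal O}(h^4/a)$ (Lemma~\ref{estM0M1}); this $M_1$ is the source of the $h^4/(a\widetilde a)$ term. The leading piece $M_0(z)=h^2(\la W_\theta R_2^\theta W_\theta^*u_k^\theta,u_j^{-\theta}\ra)_{j,k}$ is then replaced, up to exponentially small errors, by a matrix $\widetilde M_0(z)$ built from a filled-well operator $\widetilde P_2$, which is self-adjoint for real $z$. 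This self-adjointness permits an analytic diagonalisation of $\Lambda-\widetilde M_0(z)$ by a change of basis $A(z)=I+{\mathcal O}(h^2)$, with simple real eigenvalues $\mu_j(z)=\lambda_j+f_j(z)$ satisfying $f_j'={\mathcal O}(h^2)$; the residue then reduces to $(1-f_j'(\widetilde\lambda_j))^{-1}|\alpha_j|^2+{\mathcal O}(h^2\|\varphi\|^2)$. Without this step your bound would only be ${\mathcal O}(h^2/\widetilde a)$.

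A smaller point on the remainder: the $h^2/a$ smallness does not come from ``absorbing $(z-\Lambda)^{-1}$ into the residues''. After deforming the $+i0$ piece below the resonances, what remains on $\gamma_-$ is the integral of the \emph{difference} $S_\theta(z)=\la R_\theta\varphi_\theta,\varphi_{-\theta}\ra-\la R_{-\theta}\varphi_{-\theta},\varphi_\theta\ra$, and it is in this subtraction that the leading $\sum_j|\alpha_j|^2/(\lambda_j-z)$ terms cancel, leaving an expression with an explicit $h^2$ prefactor (see~(\ref{estStheta})). Your resolvent-identity idea does lead here if applied to $M_\theta-M_{-\theta}$, but as written it obscures this cancellation and does not account for the $-i0$ boundary value.
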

\begin{remark}\sl
Actually, our proof also gives a generalization of a result given in \cite{CGH} for the case $m=1$ : see Propositions \ref{genCGH} and \ref{genCGH1}.
\end{remark} 

\begin{remark}\sl 
\label{remr0}
Concerning the remainder term, we will  see  in the proof that it  is of the form 
$$
 r(t, \varphi,h)= r_0(t, \varphi,h) +{\mathcal O}\left(  \frac{h^4}{(a(h))^2} \min_{0\leq k \leq \nu }\{ \frac{1}{[(1+t)a(h)]^{k}}\}\|\varphi\|^2\right)
$$
with
\be
\label{r0lim}
\begin{aligned} r_0(t, \varphi,h) = & -h^2\lim_{\varepsilon, \varepsilon' \to 0_+ } \sum_{j,k}  \la \varphi,\phi_j \ra \overline{  \la \varphi, \phi_k\ra} \\ &\times \la e^{-itP_2} g(P_2) (P_2-\lambda_j-i\varepsilon)^{-1} W^* u_j, (P_2-\lambda_k + i\varepsilon')^{-1} W^* u_k \ra. \end {aligned}
\ee
\end{remark}
\begin{remark}\sl
\label{rembeyondlt}
In particular,  one has $\vert  r(t, \varphi,h)\vert  << \vert  e^{-it\rho_j}\vert $ as long as $ 0\leq t<< \frac {1}{ \vert \Im \rho_j\vert} \ln( a(h) / h^2)$ that is much beyond the life time. In the case $ \nu \geq1$, since $\vert \Im \rho_j\vert$ is exponentially small w.r.t. $h$ , this can indeed be improved by  allowing times up to $ \frac{C_0 \nu}{h\vert \Im \rho_j\vert}$ for some constant $C_0>0$ independent of $\nu$.
\end{remark}
\begin{remark}\sl Let us observe that, in the particular case where $m=1$, one obtains $b_1(\varphi ,h)=|\la \varphi, \phi_1\ra|^2 +{\mathcal O}\left( (h^2 + h^4/a^2)\|\varphi\|^2\right)$. Therefore, in the situation of the Theorem with (\ref{gaplambda}), a mere application of the previous result for each $\lambda_j$ would give $b_j(\varphi ,h)=|\la \varphi, \phi_j\ra|^2 +{\mathcal O}\left( (h^2 + h^4/\widetilde a^2)\|\varphi\|^2\right)$, and  compared with (\ref{resutres}) this is a weaker result if $\,\widetilde a(h) << a(h)$.
\end{remark}

As a corollary, for the case without energy cutoff, we also obtain,
\begin{corollary}\sl
\label{corollaire} In the general situation of Theorem \ref{mainth} (without the assumption on the simplicity of the $\lambda_j$'s), one has,
$$
\la e^{-itH}\varphi, \varphi\ra =\sum_{j=1}^m e^{-it\rho_j}b_j(\varphi, h)+{\mathcal O}\left( (h^2 + h^4a(h)^{-2})\|\varphi\|^2\right).
$$
\end{corollary}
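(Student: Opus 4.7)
The strategy is to deduce the corollary from Theorem \ref{mainth} by inserting a smooth energy cutoff $g$ and controlling its complement. I split
\[
\la e^{-itH}\varphi,\varphi\ra = \la e^{-itH}g(H)\varphi,\varphi\ra + \la e^{-itH}(1-g(H))\varphi,\varphi\ra,
\]
and pick $g\in C_c^\infty(\R;[0,1])$ with $g\equiv 1$ on $I(h)+[-a(h),a(h)]$, $\supp g\subset I(h)+(-2a(h),2a(h))$, and $g^{(k)}=\mathcal{O}(a(h)^{-k})$ for every $k$. Theorem \ref{mainth}, combined with the sharper form of the remainder in Remark \ref{remr0} (since $r_0$ is always $\mathcal{O}(h^2\norm{\varphi}^2)$ uniformly in $t$, taking $\nu=0$ is enough), then gives
\[
\la e^{-itH}g(H)\varphi,\varphi\ra = \sum_{j=1}^m e^{-it\rho_j}b_j(\varphi,h) + \mathcal{O}\bigpare{(h^2 + h^4a(h)^{-2})\norm{\varphi}^2}.
\]

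For the second summand, I use that $e^{-itH}$ commutes with $g(H)$ and that, since $0\le 1-g\le 1$, the operator $(1-g(H))^{1/2}$ is bounded, self-adjoint, and commutes with $e^{-itH}$. Unitarity of $e^{-itH}$ then yields
\[
\bigabs{\la e^{-itH}(1-g(H))\varphi,\varphi\ra} = \bigabs{\la e^{-itH}(1-g(H))^{1/2}\varphi,(1-g(H))^{1/2}\varphi\ra}\le \la(1-g(H))\varphi,\varphi\ra,
\]
so it is enough to prove the time-independent estimate $\la(1-g(H))\varphi,\varphi\ra = \mathcal{O}\bigpare{(h^2+h^4a(h)^{-2})\norm{\varphi}^2}$.

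Since $g(\lambda_j)=1$ gives $(1-g(H_0))\varphi=0$, one has $\la(1-g(H))\varphi,\varphi\ra = \la(g(H_0)-g(H))\varphi,\varphi\ra$. I then apply the Helffer--Sj\"ostrand formula with an almost-analytic extension $\tilde g$ of $g$ and iterate the second resolvent identity twice:
\[
(z-H_0)^{-1} - (z-H)^{-1} = -h(z-H_0)^{-1}{\mathcal W}(z-H_0)^{-1} - h^2(z-H)^{-1}{\mathcal W}(z-H_0)^{-1}{\mathcal W}(z-H_0)^{-1}.
\]
Writing $\varphi=\sum_j\alpha_j\phi_j$, the first term applied to $\phi_j$ equals $(z-\lambda_j)^{-1}(z-H_0)^{-1}{\mathcal W}\phi_j$, which lies entirely in the second component of $\mathcal{H}$ (because ${\mathcal W}\phi_j=(0,W^*u_j)^t$ and $(z-H_0)^{-1}$ is block-diagonal), whereas each $\phi_k$ is supported in the first component. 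Hence the $\mathcal{O}(h)$ contribution to $\la(g(H_0)-g(H))\varphi,\varphi\ra$ vanishes identically, and only the $\mathcal{O}(h^2)$ term has to be estimated.

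The hard step is to bring this $\mathcal{O}(h^2)$ contribution down to the required $\mathcal{O}(h^2+h^4 a(h)^{-2})$. A crude majoration using $|z-\lambda_j|,|z-\lambda_k|\ge a(h)$ on $\supp(\bar\partial\tilde g)$ together with $\norm{(z-H)^{-1}}\le|\Im z|^{-1}$ only yields $\mathcal{O}(h^2 a(h)^{-2})$, which is too large. One must instead exploit the analyticity (in the upper half plane, with boundary values controlled by limiting absorption under Assumption [V] or [NT]) of the reduced matrix element $z\mapsto \la W(z-P_2)^{-1}W^*u_j,u_k\ra$ to convert the Helffer--Sj\"ostrand integral into a contour extracting residues at $z=\lambda_j,\lambda_k$. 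This residue analysis is essentially the same mechanism that produces the term $r_0$ of Remark \ref{remr0}, here evaluated at $t=0$, and it is the only point where more than block structure and routine resolvent bookkeeping is required.
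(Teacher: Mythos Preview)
Your overall strategy---splitting with a cutoff $g$, handling the first piece via Theorem \ref{mainth} and Remark \ref{remr0}, and reducing the second piece to the time-independent estimate $\la(1-g(H))\varphi,\varphi\ra=\mathcal{O}((h^2+h^4a^{-2})\|\varphi\|^2)$ through the $(1-g(H))^{1/2}$ trick---matches the paper exactly. The divergence is in how you prove that time-independent estimate.

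You try to attack $\la(g(H_0)-g(H))\varphi,\varphi\ra$ directly via the Helffer--Sj\"ostrand formula and an iterated resolvent identity. Your observation that the $\mathcal{O}(h)$ term vanishes by the block structure is correct, but then you acknowledge that the crude bound on the $\mathcal{O}(h^2)$ term only yields $\mathcal{O}(h^2a^{-2})$ and leave the improvement as a sketch (``residue analysis \dots essentially the same mechanism that produces $r_0$''). This is a genuine gap: the remaining integrand still contains the full resolvent $(z-H)^{-1}$, not just $(z-P_2)^{-1}$, so the matrix element is not simply $\la W(z-P_2)^{-1}W^*u_j,u_k\ra$, and converting the Helffer--Sj\"ostrand integral into a clean contour argument requires nontrivial work you have not carried out.

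The paper avoids this entirely. It observes that Theorem \ref{mainth} applied at $t=0$ already gives $\la g(H)\varphi,\varphi\ra=\sum_j b_j(\varphi,h)+r(0,\varphi,h)$ with $r(0,\varphi,h)=\mathcal{O}((h^2+h^4a^{-2})\|\varphi\|^2)$, so one only needs the separate lemma $\sum_j b_j(\varphi,h)=(1+\mathcal{O}(h^2+h^4a^{-2}))\|\varphi\|^2$. That lemma is proved by a contour integral of $\la E_{-+}(z)^{-1}\alpha_\varphi,\alpha_\varphi\ra$ around a rectangle, using the Grushin machinery already set up (in particular the decomposition $M=M_0+M_1$ and the key fact that $M_0(z)$ extends analytically to an $h$-independent neighborhood of $I(h)$). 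This route recycles existing estimates rather than redoing a resolvent-comparison computation from scratch, and it delivers the required bound without any residual hard step.
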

In the sequels, we will concentrate on the detailed proof of Theorem \ref{mainth} in the case of Assumption [V]. The more general case of Assumption [NT] can be proved in a similar way by using the Helffer-Sj\"ostrand framework of resonances theory \cite{HeSj2}, and will be outlined in Section \ref{casNT}.
%%%%%%%%%%%%%%%%%%%%%%%
\section{Preliminaries}
\label{secprelim}
In order to prove Theorem \ref{mainth}, we start from the Stone formula,
\be
\label{stone}
\la e^{-itH}g(H)\varphi, \varphi\ra =\lim_{\varepsilon \to 0_+}\frac1{2i\pi}\int_\R e^{-it\lambda} g(\lambda)\la (R(\lambda+i\varepsilon)-R(\lambda-i\varepsilon))\varphi, \varphi\ra d\lambda,
\ee
where $R(z):=(H-z)^{-1}$. In the sequels, we also denote by $R_\theta (z):=(H_\theta -z)^{-1}$ the distorted resolvent, and  by $\varphi_\theta:=U_{i\theta}\varphi$ the distortion of $\varphi$ (observe that, thanks to the analyticity of $V_1$ and the ellipticity of $P_1$, each function $u_j$ can be distorted without problem). In particular, by standard arguments (see, e.g., \cite{ReSi, HeMa}), one has $\la R(z)\varphi, \varphi\ra =\la R_\theta (z)\varphi_\theta, \varphi_{-\theta}\ra$. From now on, we fix $\theta >0$ small enough and,
thanks to the fact that $g=1$ on $I(h)+[-a(h),a(h)]$, we can slightly deform the contour of integration in this region, and rewrite (\ref{stone}) as,
\be
\label{stone1}
\begin{aligned}
\la e^{-itH}g(H)\varphi, \varphi\ra =\frac1{2i\pi}\int_{\gamma_+} e^{-itz} & g(\Re z)  \la R_\theta (z)\varphi_\theta, \varphi_{-\theta}\ra dz\\ &-\frac1{2i\pi}\int_{\gamma_-} e^{-itz} g(\Re z)\la R_{-\theta}(z)\varphi_{-\theta}, \varphi_\theta\ra dz,
\end{aligned}
\ee
where the complex contour $\gamma_\pm$ can be parametrized by $\Re z$, coincides with $\R$ away from $I(h)+(-a(h),a(h))$, and is included in $\{\pm \Im z >0\}$ on $I(h)+(-a(h),a(h))$.

Here we anticipate by using (\ref{estres}) and, proceeding as in \cite{CGH}, we see that (\ref{stone1}) can be transformed into,
\be
\label{stone2}
\la e^{-itH}g(H)\varphi, \varphi\ra = \sum_{j=1}^m e^{-it\rho_j}b_j(\varphi,  h)+ r(t,\varphi, h),
\ee
where $b_j(\varphi,  h)$ is the residue at $\rho_j$ of the meromorphic function 
$$
z\mapsto - \la R_\theta (z)\varphi_\theta, \varphi_{-\theta}\ra,
$$
and $r(t,\varphi, h)$ is given by,
\be
\label{reste}
r(t,\varphi, h):= \frac1{2i\pi}\int_{\gamma_-} e^{-itz} g(\Re z) \left(\la R_\theta (z)\varphi_\theta, \varphi_{-\theta}\ra -\la R_{-\theta}(z)\varphi_{-\theta}, \varphi_\theta\ra\right) dz,
\ee
where $\gamma_-$ is chosen in such a way that it stays below the $\rho_j$'s.
Thus, the proof will consist in estimating both $b_j(\varphi,  h)$ and $r(t,\varphi, h)$.
%%%%%%%%%%%%%%%%%%%%%%%
\section{The Grushin problems}
\label{grushin}
From now on (up to Section {\ref{casNT}), we suppose Assumption [V].

In order to have good enough estimates on the resolvent, and in particular to compare it with that of $P_1$, for $z$ in $\Omega (h):= (I(h)+[-a(h),a(h)])+i[-\varepsilon_1,\varepsilon_1]$, we specify our choice of distorsion.  In (\ref{distorsion}), we take $F$ such that,
\be
\left\{\begin{array}{l}
f(x)=x \, \mbox{ in a neighborhood of the sea } \{ V_2\leq 0\};\\
F=0 \, \mbox{ in a neighborhood of the well } U=\{V_1\leq 0\}.
\end{array}\right.
\ee

With such a distorsion, it is well known (see, e.g., \cite{BCD}) that, under Assumption [V], the distorted operator $P_2^\theta$ satifies,
\be
\label{ellP2theta'}
\| (P_2^\theta -z)^{-1}\|_{{\mathcal L}(L^2(\R^n))} ={\mathcal O}(1),
\ee
uniformly with respect to $h>0$ small enough and $z\in \Omega (h)$.

We introduce the two following Grushin problems,
$$
{\mathcal G}(z):=\left(\begin{array}{cc}
H_\theta-z & L_-\\
L_+& 0
\end{array}\right)\, :\,{\mathcal D}_H \times \C^m \to{\mathcal H} \times \C^m,
$$
$$
{\mathcal G}_0(z):=\left(\begin{array}{cc}
H_0^\theta-z & L_-\\
L_+& 0
\end{array}\right)\, :\, {\mathcal D}_H\times \C^m \to {\mathcal H} \times \C^m,
$$
where $H_0^\theta$ stands for the distorted Hamiltonian obtained from $H_0$, and $L_\pm$ are defined as,
\be
\label{L-}
L_-(\alpha_1,\dots,\alpha_m):=\sum_{j=1}^m \alpha_j\phi_j^\theta;
\ee
\be
\label{L+}
L_+u:=L_-^*u=(\la u,\phi_1^{-\theta}\ra,\dots,\la u,\phi_m^{-\theta}\ra).
\ee
with $\phi_j^{\pm\theta}:=U_{\pm i\theta}\phi_j$.

It is elementary to check that ${\mathcal G}_0(z)$ is invertible, with inverse given by,
$$
{\mathcal G}_0(z)^{-1}=\left(\begin{array}{cc}
\widehat\Pi_\theta\widehat R_0^\theta (z)\widehat\Pi_\theta & L_-\\
L_+& z-\Lambda
\end{array}\right),
$$
where  $\Lambda={\rm diag}(\lambda_1,\dots,\lambda_m)$, $\widehat\Pi_\theta :=1-\Pi_\theta$ with $\Pi_\theta$ the spectral projection of $H_0^\theta$ associated with the eigenvalues $(\lambda_1,\dots,\lambda_m)$, that is,
$$
\Pi_\theta u:=\sum_{j=1}^m \la u, \phi_j^{-\theta}\ra\phi_j^\theta,
$$
and $\widehat R_0^\theta (z)$ is the reduced resolvent of $H_0^\theta$ i.e.  the inverse of the restriction of $H_0^\theta -z$ to the range of $\widehat\Pi_\theta$. 

In addition to (\ref{ellP2theta'}), we have, 

\begin{lemma}\sl
\label{estresredG}
$$
\|(\widehat P_1^{\pm\theta} -z)^{-1}\|_{{\mathcal L}(L^2(\R^n))} ={\mathcal O}(a(h)^{-1}),
$$
uniformly with respect to $h>0$ small enough and $z\in\Omega (h)$.
\end{lemma}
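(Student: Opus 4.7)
The plan is to transfer the self-adjoint spectral bound available for $P_1$ to the distorted (non-self-adjoint) operator $P_1^{\pm\theta}$, by exploiting the fact that the distortion vector field $f$ vanishes near the well $U=\{V_1\leq 0\}$. At $\theta=0$, $P_1$ is self-adjoint and, by \pare{\ref{gap2}}, the set $\sigma(P_1)\setminus\{\lambda_1,\dots,\lambda_m\}$ lies outside $I(h)+[-3a(h),3a(h)]$. For $z\in\Omega(h)$ one therefore has $\dist(z,\sigma(P_1)\setminus\{\lambda_j\})\geq 2a(h)$, so the spectral theorem yields
\be
\label{sa-bound}
\bignorm{(\widehat P_1-z)^{-1}}_{\bounded(L^2(\R^n))} \leq \frac{1}{2a(h)},
\ee
where $\widehat P_1$ denotes the restriction of $P_1$ to $\Ran\widehat\Pi$, with $\widehat\Pi:=I-\sum_{j=1}^{m}\la\cdot,u_j\ra u_j$.

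I would then set up a scalar analogue of the Grushin problem of Section \ref{grushin}, namely
$$
\mathcal{P}_1^{\pm\theta}(z):=\begin{pmatrix} P_1^{\pm\theta}-z & \ell_-^{\pm\theta} \\ \ell_+^{\pm\theta} & 0 \end{pmatrix},\qquad \ell_-^{\pm\theta}\alpha:=\sum_{j=1}^{m}\alpha_j u_j^{\pm\theta},\quad \ell_+^{\pm\theta}u:=(\la u,u_j^{\mp\theta}\ra)_{j=1,\dots,m},
$$
with $u_j^{\pm\theta}:=U_{\pm i\theta}u_j$. Since $u_j$ is an exponentially decaying eigenfunction of $P_1$, it extends holomorphically into the distortion sector (by Assumption 2 and a Combes--Thomas-type argument), and analytic continuation in $\nu$ from the unitary regime $\nu\in\R$ yields both $P_1^{\pm\theta}u_j^{\pm\theta}=\lambda_j u_j^{\pm\theta}$ and $\la u_j^{\pm\theta},u_k^{\mp\theta}\ra=\delta_{jk}$. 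The same algebraic manipulations as in Section \ref{grushin} then show that $\mathcal{P}_1^{\pm\theta}(z)$ is invertible for $z\in\Omega(h)$, with top-left block of the inverse equal to $(\widehat P_1^{\pm\theta}-z)^{-1}\widehat\Pi^{\pm\theta}$ (where $\widehat\Pi^{\pm\theta}:=I-\sum_j\la\cdot,u_j^{\mp\theta}\ra u_j^{\pm\theta}$) and bottom-right block equal to $z-\Lambda$.

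To bound the norm of this top-left block, I would localize with a cutoff. Pick $\chi\in C_c^\infty(\R^n)$ equal to $1$ on a neighborhood of $U$ and supported in $\{f=0\}$, and for $u\in\Ran\widehat\Pi^{\pm\theta}$ split $u=\chi u+(1-\chi)u$. On $\supp(1-\chi)$, Assumption 2 ensures $\Re V_1\geq c>0$, so $P_1^{\pm\theta}-z$ is uniformly elliptic for $z\in\Omega(h)$ and a standard coercivity estimate gives $\norm{(1-\chi)u}\leq C\norm{(P_1^{\pm\theta}-z)(1-\chi)u}$, uniformly in $h$ and $z$. On $\supp\chi$, $P_1^{\pm\theta}$ coincides with $P_1$; moreover the Agmon decay of the $u_j$'s outside $U$ gives $u_j^{\mp\theta}=u_j+{\mathcal O}(e^{-c/h})$ in $L^2$, so $\Pi(\chi u)={\mathcal O}(e^{-c/h})\norm{u}$, i.e.\ $\chi u$ lies essentially in $\Ran\widehat\Pi$. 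Applying \pare{\ref{sa-bound}} to $\widehat\Pi(\chi u)$ and absorbing the commutators $[P_1,\chi]u$ (supported in the transition region and of order $h$, in view of $P_1=-h^2\Delta+V_1$) then yields $\norm{\chi u}\leq (C/a(h))\norm{(P_1^{\pm\theta}-z)u}$ modulo an exponentially small remainder, which combined with the elliptic bound on $(1-\chi)u$ produces the claimed estimate.

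The main obstacle is precisely this last step: for a non-normal operator the resolvent norm is not controlled by the distance to the spectrum, so the sharp $a(h)^{-1}$ bound cannot be deduced from spectral separation alone. The argument works only thanks to the very specific geometry of this distortion, which cleanly decouples the small-gap spectral phenomenon (confined to the well, where $P_1^{\pm\theta}=P_1$ is self-adjoint and \pare{\ref{sa-bound}} applies directly) from the non-self-adjoint perturbation (confined to $\supp f$, where $P_1^{\pm\theta}-z$ is coercive uniformly in $z\in\Omega(h)$).
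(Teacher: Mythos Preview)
Your strategy matches the paper's: exploit that the distortion vanishes near the well (so $P_1^{\pm\theta}=P_1$ there and the self-adjoint gap bound applies) and use ellipticity where $\Re V_1\geq c>0$. The paper, however, implements this via a right parametrix with \emph{nested} cutoffs $\eta,\psi,\chi$ (each equal to $1$ on a neighbourhood of the support of the previous one, all supported in $\{f=0\}$) and a filled-well operator $\widetilde P_1^\theta:=P_1^\theta+\eta$: setting
$$X(z):=\chi(\widehat P_1-z)^{-1}\psi+(\widetilde P_1^\theta-z)^{-1}(1-\psi),$$
one obtains $(P_1^\theta-z)\widehat\Pi_1^\theta X(z)\widehat\Pi_1^\theta=\widehat\Pi_1^\theta+Y(z)$ with $\|Y(z)\|={\mathcal O}(e^{-\delta/h})$, the exponential smallness coming from Agmon tunneling between $\supp\nabla\chi$ and $\supp\psi$ (and between $\supp\eta$ and $\supp(1-\psi)$). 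Inverting $\widehat\Pi_1^\theta+Y(z)$ then gives the bound directly.

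The gap in your version is the commutator step. You assert that $[P_1,\chi]u$ is ``of order $h$'' and can be absorbed, but feeding it through your self-adjoint bound $\|(\widehat P_1-z)^{-1}\|\leq (2a(h))^{-1}$ multiplies it by $a(h)^{-1}$, producing a contribution of size $(h/a(h))\,\|u\|_{H^1_h}$ to the estimate for $\|\chi u\|$. Since only $h^2/a(h)\to 0$ is assumed --- and in the examples of Section~\ref{secexample} one has $a(h)\sim h$ --- this term is ${\mathcal O}(1)$ and cannot be absorbed into the left-hand side. What makes the argument close is not that the commutator is small as an operator, but that it is \emph{localised at positive Agmon distance} from where the resolvent input lives; this is precisely what the nesting $\supp\psi\subset\{\chi=1\}$ buys, turning an ${\mathcal O}(h)$ commutator into an ${\mathcal O}(e^{-\delta/h})$ error after composition with $(\widehat P_1-z)^{-1}\psi$. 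With a single cutoff there is no such separation, and your a priori estimate does not close as written. (Your Grushin problem for $P_1^{\pm\theta}$ is correct but plays no role beyond identifying the reduced resolvent; the paper dispenses with it.)
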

\begin{proof}
See Appendix 1.
\end{proof}

In order to prove that ${\mathcal G}(z)$ is invertible, too, and to compare its inverse with  ${\mathcal G}_0(z)^{-1}$, we compute the product,
$$
{\mathcal G}(z){\mathcal G}_0(z)^{-1}=:\left(\begin{array}{cc}
A_{11}& A_{12}\\
A_{21}& A_{22}
\end{array}\right).
$$
Using that $H_\theta =H_0^\theta + h{\mathcal W}_\theta$ (where ${\mathcal W}_\theta$ stands for the distorted operator obtained from ${\mathcal W}$), we find,
$$
\begin{aligned}
& A_{11} = 1+h{\mathcal W}_\theta\widehat R_0^\theta (z);\\
& A_{12} = h{\mathcal W}_\theta L_- ;\\
& A_{21} =0;\\
& A_{22} = I_{\C^m}.
\end{aligned}
$$
Then, we observe,
$$
\widehat\Pi_\theta =\left(\begin{array}{cc}
\widehat\Pi^\theta_1& 0\\
0 & 1
\end{array}\right),
$$
and,
$$
\widehat R_0^\theta (z)=\left(\begin{array}{cc}
\widehat\Pi^\theta_1\widehat R_1^\theta (z)\widehat\Pi^\theta_1& 0\\
0 & R_2^\theta (z)
\end{array}\right),
$$
where $R_2^\theta (z)$ is the resolvent of $P_2^\theta$ (the distorted operator obtained from $P_2$), and $\widehat R_1^\theta (z)$ is the reduced resolvent of $P_1^\theta$. Thus,
denoting by $W_\theta$ the distorted operator obtained from $W=w(x,hD_x)$, and $W^*_\theta$ that obtained from $W^*$, we find,
$$
h{\mathcal W}_\theta\widehat R_0^\theta (z)=\left(\begin{array}{cc}
0& hW_\theta R_2^\theta (z)\\
hW_\theta^*\widehat\Pi^\theta_1\widehat R_1^\theta (z)\widehat\Pi^\theta_1 & 0
\end{array}\right).
$$
Here we must be aware that this operator is not ${\mathcal O}(h)$, since $\widehat\Pi^\theta_1\widehat R_1^\theta (z)\widehat\Pi^\theta_1$ is  ${\mathcal O}(a(h)^{-1})$ only. However, the other off-diagonal operator $hW_\theta R_2^\theta (z)$ is ${\mathcal O}(h),$ and this is enough, for instance, to invert $1+h{\mathcal W}_\theta\widehat R_0^\theta (z)$ without problem. From now on, we set,
\be
\label{estQ1Q2}
Q_1(z):= W_\theta^*\widehat\Pi^\theta_1\widehat R_1^\theta (z)\widehat\Pi^\theta_1={\mathcal O}(a(h)^{-1})\;;\;Q_2(z):=W_\theta R_2^\theta (z)={\mathcal O}(1).
\ee
In particular, 
\be
\label{K}
K(z):= h^2Q_1(z)Q_2(z)={\mathcal O}(h^2/a(h)),
\ee
 and thus, by assumption (\ref{gap2}), the operator $1-K$ is invertible for $h>0$ small enough. Then, a straightforward computation shows that ${\mathcal G}(z){\mathcal G}_0(z)^{-1}$ is invertible, with inverse given by,
 $$
 {\mathcal F}(z):=\left(\begin{array}{cc}
B_1(z)& B_2(z)\\
0& I_{\C^m}
\end{array}\right),
 $$
 where,
 $$
 B_1(z):=\left(\begin{array}{cc}
1+h^2Q_2(1-K)^{-1}Q_1& -hQ_2(1-K)^{-1}\\
-h(1-K)^{-1}Q_1& (1-K)^{-1}
\end{array}\right),
 $$
 and
 \be
 \label{B2}
 B_2(z):=h^2\left(\begin{array}{cc}
0& Q_2(1-K)^{-1}\\
0& (1-K)^{-1}
\end{array}\right){\mathcal W}_\theta L_-.
\ee
 (Here, we have also used the fact that  the first component of ${\mathcal W}_\theta\phi_j$ is identically 0.)
 
 A similar computation shows that ${\mathcal G}_0(z)^{-1}{\mathcal G}(z)$ is invertible, too, and, as a consequence, so is ${\mathcal G}(z)$, with inverse,
\be
 {\mathcal G}(z)^{-1} ={\mathcal G}_0(z)^{-1}{\mathcal F}(z)=\left(\begin{array}{cc}
E(z)& E_+(z)\\
E_-(z)&E_{-+}(z)
\end{array}\right),
\ee
where,
\be
\label{G-1H0}
\begin{aligned}
& E(z):=\widehat\Pi_\theta\widehat R_0^\theta (z)\widehat\Pi_\theta B_1(z)\\
& E_+(z):=L_- +\widehat\Pi_\theta\widehat R_0^\theta (z) B_2(z) \\
& E_-(z):=L_+B_1\\
& E_{-+}(z):=z-\Lambda+L_+B_2(z).
\end{aligned}
\ee
We set,
$$
M(z):=L_+B_2(z)=M_0(z)+ M_1(z),
$$
with
$$
M_0(z):= h^2L_+\left(\begin{array}{cc}
0& Q_2(z)\\
0& 1
\end{array}\right){\mathcal W}_\theta L_- ,
$$
and
\be
\label{M1}
M_1(z):=L_+B_2(z)-M_0(z).
\ee
One can prove,
\begin{lemma}\sl 
\label{estM0M1}
One has,
$$
\begin{aligned}
& \| M_0(z)\|_{{\mathcal L}(\C^m)}={\mathcal O}(h^2);\\
& \| M_1(z)\|_{{\mathcal L}(\C^m)}={\mathcal O}(h^4/a(h))=o(h^2),
\end{aligned}
$$
uniformly with respect to $h>0$ small enough and $z\in\Omega (h)$.
\end{lemma}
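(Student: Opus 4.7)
The approach is to exploit the block structure of $\mathcal{W}_\theta$, $L_-$ and $L_+$ to turn $M(z)=L_+B_2(z)$ into an explicit bilinear form on $L^2(\R^n)$, and then to split off $M_0$ from the correction $M_1$ using the identity $(1-K)^{-1}-1=K(1-K)^{-1}$.

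Concretely, since $\phi_j^\theta=(u_j^\theta,0)^T$, one computes $\mathcal{W}_\theta L_-\alpha=(0,w_\alpha)^T$ with $w_\alpha:=\sum_j\alpha_j W_\theta^*u_j^\theta$, and applying $L_+$ to $B_2\alpha$ yields
\[
(M_0\alpha)_k = h^2\la Q_2(z)\,w_\alpha, u_k^{-\theta}\ra,\qquad (M_1\alpha)_k = h^2\la Q_2(z) K(z)(1-K(z))^{-1}w_\alpha, u_k^{-\theta}\ra.
\]
The crucial uniform estimate is then $\|w_\alpha\|_{L^2}=\mathcal{O}(|\alpha|)$, independent of $m$. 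Since $W_\theta^*$ is a first-order semiclassical pseudodifferential operator bounded from the semiclassical Sobolev space of order $1$ (with norm $\|u\|^2+\|h\nabla u\|^2$) into $L^2$, it suffices to show $\|\sum\alpha_j u_j^\theta\|^2+\|h\nabla\sum\alpha_j u_j^\theta\|^2=\mathcal{O}(|\alpha|^2)$. The $L^2$ part follows from the approximate orthonormality of $\{u_j^\theta\}$, while the gradient part is obtained by integration by parts together with $P_1u_j=\lambda_j u_j$:
\[
\|h\nabla\sum\alpha_j u_j\|^2=\sum_{j,k}\alpha_j\bar\alpha_k\la(\lambda_j-V_1)u_j,u_k\ra\leq (\sup_j|\lambda_j|+\|V_1\|_\infty)|\alpha|^2,
\]
and the distortion contributes only an exponentially small correction, because $f\equiv 0$ near the well $U$, off which $u_j$ is exponentially small.

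Combining this with $\|Q_2(z)\|=\mathcal{O}(1)$ from (\ref{estQ1Q2}) and the Bessel-type bound $\sum_k|\la v,u_k^{-\theta}\ra|^2\leq C\|v\|^2$ (valid with $C$ independent of $h$ since $\{u_k\}$ is orthonormal and $U_{-i\theta}$ is a small perturbation of the identity) yields $\|M_0(z)\|_{\mathcal{L}(\C^m)}=\mathcal{O}(h^2)$. For $M_1$, one additionally invokes $\|K(z)\|=\mathcal{O}(h^2/a(h))$ from (\ref{K}) together with $\|(1-K(z))^{-1}\|=\mathcal{O}(1)$ (which holds thanks to the hypothesis $h^2/a(h)\to 0$ in (\ref{gap2})) to conclude $\|M_1(z)\|_{\mathcal{L}(\C^m)}=\mathcal{O}(h^4/a(h))=o(h^2)$.

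The main technical obstacle is the control of $\|w_\alpha\|$: because $W_\theta^*$ is of order $1$, a naive term-by-term Cauchy--Schwarz bound would cost a factor $\sqrt{m}=\mathcal{O}(h^{-n/2})$, which would destroy the estimate. The semiclassical $H^1_h$ argument above is designed precisely to avoid this loss; everything else is routine bookkeeping built on the operator norm estimates already established in (\ref{estQ1Q2}) and (\ref{K}).
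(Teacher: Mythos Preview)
Your argument is correct, but it takes a genuinely different route from the paper's own proof in Appendix 2. The paper does not isolate $w_\alpha=W_\theta^*\sum_j\alpha_j u_j^\theta$ at all. Instead it proves an abstract lemma: for any bounded operator $A$ on $L^2(\R^n)$, the matrix $M_A:=(\la Au_k^\theta,u_j^{-\theta}\ra)_{j,k}$ satisfies $\|M_A\|_{\mathcal L(\C^m)}=\mathcal O(\|A\|_{\mathcal L(L^2)})$, and then applies this with $A=W_\theta R_2^\theta(z)W_\theta^*$ (for $M_0$) and $A=Q_2(1-K)^{-1}KW_\theta^*$ (for $M_1$). The first-order growth of $W_\theta^*$ is absorbed by the smoothing of $R_2^\theta$, so that $A$ is $L^2\to L^2$ bounded with the right norm. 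The proof of the abstract lemma is essentially your Bessel step together with $\|\widetilde\alpha\|=\mathcal O(|\alpha|)$, carried out by localizing to a neighborhood $D$ of the well where $u_j^{\pm\theta}=u_j$ and using the quasi-orthonormality $\la u_j,u_k\ra_{L^2(D)}=\delta_{jk}+\mathcal O(e^{-c/h})$.

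The practical difference is where the derivative loss from $W_\theta^*$ is paid: you pay it on the $P_1$ side, using $P_1u_j=\lambda_j u_j$ to get the uniform $H^1_h$ bound on $\sum_j\alpha_j u_j$; the paper pays it on the $P_2$ side, using the ellipticity of $P_2^\theta-z$ so that $W_\theta R_2^\theta W_\theta^*$ is bounded on $L^2$. Your approach has the virtue of making the control of $W_\theta^*$ completely explicit and of only invoking the $L^2\to L^2$ bound \eqref{ellP2theta'} on $R_2^\theta$; the paper's approach packages everything into a single reusable statement about $M_A$. One small remark: your justification of the Bessel bound (``$U_{-i\theta}$ is a small perturbation of the identity'') is a shorthand for the same quasi-orthonormality fact the paper uses, namely that $u_k^{-\theta}=u_k+\mathcal O(e^{-c/h})$ because $f\equiv 0$ near the well.
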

\begin{proof}
See Appendix 2.
\end{proof}

In particular,
\be
\label{estM1}
\| M(z)\|_{{\mathcal L}(\C^m)}={\mathcal O}(h^2),
\ee
uniformly with respect to $z\in\Omega (h)$ and $h>0$ small enough. Since $h^2/a(h)\to 0$, by standard perturbation theory we deduce,
$$
{\rm Sp}(\Lambda + M(z))=\{\lambda_1(z),\dots,\lambda_m(z)\},
$$
with, 
$$
\lambda_j(z) =\lambda_j+{\mathcal O}(h^2).
$$
As a consequence the solutions $z\in\Omega (h)$ of the problem,
$$
0\in \sigma (E_{-+}(z)),
$$
are all of the form, 
$$
z=\lambda_j +{\mathcal O}(h^2),
$$
for some $j$. Deforming continuously $(E_{-+}(z))$ into $z-\Lambda$ (e.g., by setting $\Lambda_t(z):= z-\Lambda + tM(z)$, $0\leq t\leq 1$), and following continuously the roots of the determinant of $\Lambda_t(z)$ as $t$ varies from 0 to 1, we also see that all the values of $j$ are reached by such solutions.
Since we also know that these solutions are precisely the resonances of $H$ in $\Omega(h)$ (see (\ref{resolv})), we have proved (\ref{estres}).
%%%%%%%%%%%%%%%%%%%%%%%%%%

\section{The reduced resolvent}
In this section, we still consider the Grushin problem given by ${\mathcal G}(z)$, but we will solve it in a different way, in order to obtain the inverse in terms of the reduced resolvent $\widehat R_\theta (z)$ of $H_\theta$ (instead of that of $H_0^\theta$), as in the usual Feshbach method. 

Indeed, denoting by $\widehat H_\theta$ the restriction of $\widehat\Pi_\theta H_\theta$ to the range of $\widehat\Pi_\theta$, for all $z$ such that $\Im z>0$ we can define the reduced resolvent
$\widehat R_\theta (z)$ as the inverse of $\widehat H_\theta-z$, and it is straightforward to verify that, for such $z$, the inverse of ${\mathcal G}(z)$ is given by,
$$
 {\mathcal G}(z)^{-1} =\left(\begin{array}{cc}
E(z)& E_+(z)\\
E_-(z)&E_{-+}(z)
\end{array}\right),
$$
with,
\be
\label{resred1}
\begin{aligned}
& E(z):=\widehat\Pi_\theta\widehat R_\theta (z)\widehat\Pi_\theta \\
& E_+(z):=(1-h\widehat\Pi_\theta\widehat R_\theta (z)\widehat\Pi_\theta {\mathcal W}_\theta)L_-  \\
& E_-(z):=L_+(1-h{\mathcal W}_\theta \widehat\Pi_\theta\widehat R_\theta (z)\widehat\Pi_\theta)\\
& E_{-+}(z):=z-\Lambda+h^2(\la {\mathcal W}_\theta \widehat\Pi_\theta\widehat R_\theta (z)\widehat\Pi_\theta {\mathcal W}_\theta \phi_k^\theta, \phi_j^{-\theta}\ra)_{1\leq j,k\leq m}.
\end{aligned}
\ee
Comparing with (\ref{G-1H0}), we obtain in particular (still for $\Im z>0$, for which the computations of the previous section remain valid),
\be
\label{resred2}
\widehat\Pi_\theta\widehat R_\theta (z)\widehat\Pi_\theta=\widehat\Pi_\theta\widehat R_0^\theta (z)\widehat\Pi_\theta B_1(z).
\ee
Now, since both expressions are holomorphic in $\{\Im z>0\}$, and the right-hand side extends analytically in $\Omega (h)$, we conclude that so does  $\widehat\Pi_\theta\widehat R_\theta (z)\widehat\Pi_\theta$, and the identity remains valid in $\Omega (h)$. 

In addition, the expression $\la W_\theta \widehat\Pi_\theta\widehat R_\theta (z)\widehat\Pi_\theta W_\theta \phi_k^\theta, \phi_j^{-\theta}\ra$ is actually independent of $\theta$, and is nothing but the meromorphic extension to $\Omega (h)$ of the function (holomorphic in $\{\Im z >0\}$),
\be
\label{Fjk}
F_{j,k}(z):=\la {\mathcal W} \widehat\Pi\widehat R (z)\widehat\Pi {\mathcal W} \phi_k, \phi_j\ra
\ee
Finally, in order to estimates the residues appearing in (\ref{stone2}), let us recall the well known formula for the whole resolvent of $H_\theta$. For $z\in \Omega (h)\backslash \{\rho_1,\dots,\rho_m\}$, one has,
\be
\label{resolv}
R_\theta (z)=E(z) - E_+(z)\left( E_{-+}(z)\right)^{-1}E_-(z).
\ee
In view of (\ref{resred1})-(\ref{resred2}), we know that the operators $E(z)$, $E_\pm (z)$ and $E_{-+}(z)$ depend analytically on $z$ in $\Omega (h)$. Therefore, in formula (\ref{resolv}), the only possible poles come from $\left( E_{-+}(z)\right)^{-1}$. 

Therefore, we have proved,
\begin{proposition}\sl
\label{genCGH}
The distorted resolvent $R_\theta (z)$ of $H$ is given by (\ref{resolv}), where the operators $E(z)$, $E_\pm (z)$ and $E_{-+}(z)$ are given in (\ref{resred1}). Moreover, the resonances of $H$ in $\Omega (h)$ are exactly the roots of the equation,
$$
{\rm det} (z-\Lambda + h^2 F(z))=0,
$$
where $F(z)$ is the $m\times m$ matrix with coefficients $F_{j,k}(z)$ ($1\leq j,k\leq m$) given by (\ref{Fjk}).
\end{proposition}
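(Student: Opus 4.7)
The plan is essentially a synthesis of what has already been assembled in Sections~\ref{grushin} and the preceding paragraphs of Section~7. First, I would recall that by the Schur complement identity applied to the block operator ${\mathcal G}(z)$, whenever ${\mathcal G}(z)$ and the Schur complement $E_{-+}(z)$ are both invertible, the $(1,1)$-entry of ${\mathcal G}(z)^{-1}$ reproduces the resolvent of the ``ambient'' operator. In our setup this gives exactly formula (\ref{resolv}). This identity holds at least for $\Im z>0$, where the reduced resolvent $\widehat R_\theta(z)$ is directly available, and the Feshbach expressions (\ref{resred1}) are meaningful.

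Next, I would verify that each of the building blocks $E(z)$, $E_\pm(z)$, $E_{-+}(z)$ extends holomorphically from $\{\Im z>0\}$ to the whole box $\Omega(h)$. This is the content of the identity (\ref{resred2}),
$$
\widehat\Pi_\theta\widehat R_\theta(z)\widehat\Pi_\theta=\widehat\Pi_\theta\widehat R_0^\theta(z)\widehat\Pi_\theta B_1(z),
$$
whose right-hand side is analytic on $\Omega(h)$ thanks to Lemma~\ref{estresredG}, estimate (\ref{ellP2theta'}), and the invertibility of $1-K(z)$ on $\Omega(h)$ guaranteed by (\ref{K}) together with assumption (\ref{gap2}). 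Inserting this extension into (\ref{resred1}) provides analytic continuations of $E$, $E_\pm$, and $E_{-+}$ to all of $\Omega(h)$; in particular, $E_{-+}(z)$ is the analytic $m\times m$-valued function $z-\Lambda+h^2F(z)$, where $F(z)$ is the meromorphic extension of the matrix $(F_{j,k}(z))$ defined in (\ref{Fjk}) (its $\theta$-independence having been noted just above).

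With these analyticity statements in hand, I would conclude the proof of (\ref{resolv}) by analytic continuation: both sides are meromorphic on $\Omega(h)$, they agree on $\{\Im z>0\}\cap\Omega(h)$, and on the right-hand side the only possible poles come from the factor $(E_{-+}(z))^{-1}$, since $E$, $E_+$, $E_-$ are analytic. The characterization of the resonances then falls out immediately: by definition, $\mathrm{Res}(H)\cap\Omega(h)$ consists of the poles in $\Omega(h)$ of $R_\theta(z)$, and by what has just been said these are precisely the $z\in\Omega(h)$ at which the analytic matrix $E_{-+}(z)=z-\Lambda+h^2F(z)$ fails to be invertible, i.e.\ at which $\det(z-\Lambda+h^2F(z))=0$.

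The only delicate point, which I would make explicit to close the argument, is to rule out that a pole of $R_\theta(z)$ could be ``masked'' in (\ref{resolv}) by a compensating zero of $E_+(z)(E_{-+}(z))^{-1}E_-(z)$ while still coming from the $E_{-+}^{-1}$ factor, or conversely that a zero of $\det E_{-+}(z)$ could fail to produce a true pole. The first possibility is excluded because $E(z)=\widehat\Pi_\theta\widehat R_\theta(z)\widehat\Pi_\theta$ is itself analytic on $\Omega(h)$, so the full singular part of $R_\theta(z)$ is concentrated in $E_+(z)(E_{-+}(z))^{-1}E_-(z)$; the second is the standard observation that at such a zero the vector $v\in\ker E_{-+}(z_0)$ produces, via the Grushin formalism, a non-trivial generalized eigenfunction, hence a genuine pole. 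This yields the stated equivalence and completes the proof.
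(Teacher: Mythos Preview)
Your proposal is correct and follows essentially the same route as the paper: the paper derives (\ref{resred1}) by the Feshbach method for $\Im z>0$, compares with the alternative inverse from Section~\ref{grushin} to obtain (\ref{resred2}), uses the analyticity of the right-hand side of (\ref{resred2}) on $\Omega(h)$ to continue $E$, $E_\pm$, $E_{-+}$, and then reads off that the only poles in (\ref{resolv}) come from $E_{-+}(z)^{-1}$. Your additional paragraph on ruling out ``masking'' (that a zero of $\det E_{-+}$ must produce a genuine pole of $R_\theta$) is a welcome bit of rigor that the paper leaves implicit, relying on the standard Grushin correspondence; the argument you sketch---that $v\in\ker E_{-+}(z_0)$ yields $E_+(z_0)v\not=0$ in $\ker(H_\theta-z_0)$ via the identities $(H_\theta-z)E_++L_-E_{-+}=0$ and $L_+E_+=I$---is exactly the right way to close this.
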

In the particular case where $m=1$, let us observe that, at first glance, $F_{1,1}(z)$ can be estimated by ${\mathcal O}(a(h)^{-1})$, and its holomorphic derivative $F_{1,1}'(z)$ by ${\mathcal O}(a(h)^{-2})$ (this is because of the presence of the reduced resolvent in $F_{1,1}(z)$). For the resonance, this leads to, 
$$
\rho_1=\lambda_1-h^2F_{1,1}(\rho_1)=\lambda_1 +{\mathcal O}(h^2/a(h)) =\lambda_1 -h^2F_{1,1}(\lambda_1)+{\mathcal O}(h^4/a(h)^3),
$$
which, compared to the result given in \cite{CGH} seems much less interesting.
But actually, looking more precisely to the expression of $F(z)$, one can prove,
\begin{lemma}\sl In the case $m=1$, one has,
$$
|F_{1,1}(z)|+|F_{1,1}'(z)| ={\mathcal O}(1),
$$
uniformly with respect to $h>0$ small enough and $z\in\Omega (h)$.
\end{lemma}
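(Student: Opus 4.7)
The plan is to exploit the particular simplifications that occur for $m=1$ in the expressions from the Grushin analysis of Section~\ref{grushin}. First I would use the fact that $\mathcal{W}\phi_1 = \binom{0}{W^*u_1}$ is orthogonal to $\phi_1$, so $\widehat{\Pi}\mathcal{W}\phi_1 = \mathcal{W}\phi_1$; combining (\ref{resred2}) with the explicit form of $B_1(z)$ I would obtain
\[
F_{1,1}(z) \;=\; \langle R_2^\theta(z)(1-K(z))^{-1} W_\theta^* u_1^\theta,\; W_{-\theta}^* u_1^{-\theta}\rangle,
\]
where $K(z) = h^2\,W_\theta^*\widehat{\Pi}_1^\theta\widehat{R}_1^\theta(z)\widehat{\Pi}_1^\theta W_\theta R_2^\theta(z)$. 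The bound $|F_{1,1}(z)| = \mathcal{O}(1)$ then follows at once: $R_2^\theta(z) = \mathcal{O}(1)$ by (\ref{ellP2theta'}); $K(z) = \mathcal{O}(h^2/a(h)) = o(1)$ by Lemma~\ref{estresredG} and (\ref{K}), so $(1-K(z))^{-1} = \mathcal{O}(1)$; and $\|W_\theta^*u_1^{\pm\theta}\| = \mathcal{O}(1)$. The crucial point is that although $\widehat{R}_1^\theta(z)$ is only $\mathcal{O}(a(h)^{-1})$, it enters solely through the small combination $K(z)$.

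For the derivative, I would differentiate the above expression, splitting $F_{1,1}'(z)$ into a contribution from $\partial_z R_2^\theta = (R_2^\theta)^2 = \mathcal{O}(1)$ (harmless) and one from $\partial_z(1-K)^{-1} = (1-K)^{-1} K'(z)(1-K)^{-1}$. Since $K'(z)$ contains the piece $h^2 W_\theta^*(\widehat{R}_1^\theta)^2 \widehat{\Pi}_1^\theta W_\theta R_2^\theta$ with $(\widehat{R}_1^\theta)^2 = \mathcal{O}(a(h)^{-2})$, this produces an \emph{a priori} estimate of only $h^2/a(h)^2$, which is not $\mathcal{O}(1)$ under the sole assumption (\ref{gap2}). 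This is where the main obstacle lies.

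To overcome it, my plan is to show that $F_{1,1}(z)$ in fact admits a holomorphic extension, remaining uniformly $\mathcal{O}(1)$, to a complex neighborhood of $\Omega(h)$ of size independent of $h$; Cauchy's formula would then yield $F_{1,1}'(z) = \mathcal{O}(1)$ directly. The key insight is that near each eigenvalue $\lambda_j$ of $P_1^\theta$ outside $I(h)$, the singular part of $\widehat{R}_1^\theta(z)$ is rank one, of the form $(\lambda_j - z)^{-1}|u_j^\theta\rangle\langle u_j^{-\theta}|$, so that $K(z)$ itself acquires only a rank-one singular structure $c_j(z)|a_j\rangle\langle b_j|$ with $c_j(z) = h^2\langle R_2^\theta(z) W^*u_j^\theta,\,W^*u_j^{-\theta}\rangle/(\lambda_j - z)$. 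The elementary identity $(1-|a\rangle\langle b|)^{-1} = 1 + |a\rangle\langle b|/(1-\langle b,a\rangle)$ then shows that $(1-K(z))^{-1}$ stays uniformly bounded as $z \to \lambda_j$, away from the renormalized root at distance $\mathcal{O}(h^2)$ from $\lambda_j$, which by (\ref{gap2}) lies far outside $\Omega(h)$. The hardest step will be to assemble these local rank-one analyses into a genuine uniform bound on $(1-K(z))^{-1}$ across the whole extended domain, in particular to control the transition regions between successive $\lambda_j$'s where the rank-one approximation breaks down.
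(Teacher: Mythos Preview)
Your bound on $|F_{1,1}(z)|$ is correct and coincides with the paper's argument: both exploit that $\mathcal{W}_\theta\phi_1^\theta$ has vanishing first component, so that when one unwinds (\ref{resred2}) only $R_2^\theta(z)=\mathcal{O}(1)$ appears, and the dangerous reduced resolvent $\widehat R_1^\theta(z)$ enters solely through the small factor $K(z)$.

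For $F_{1,1}'(z)$, however, your route via analytic continuation and a rank-one analysis of $(1-K(z))^{-1}$ near each exterior eigenvalue of $P_1$ is far more involved than necessary, and the step you yourself flag as hardest --- gluing the local rank-one pictures across transition regions into a uniform global bound --- is a genuine obstacle (the exterior spectrum of $P_1$ may be dense on the scale $a(h)$, so the ``renormalized roots'' you describe need not stay well separated from $\Omega(h)$ collectively). The paper bypasses this completely with a short duality trick. Instead of differentiating your explicit formula, which inevitably produces $K'(z)$, differentiate the \emph{original} expression
\[
F_{1,1}(z)=\langle \mathcal{W}_\theta\,\widehat\Pi_\theta\widehat R_\theta(z)\widehat\Pi_\theta\,\mathcal{W}_\theta\phi_1^\theta,\;\phi_1^{-\theta}\rangle
\]
to obtain $\widehat R_\theta(z)^2$ in place of $\widehat R_\theta(z)$, then move one factor of $\widehat\Pi_\theta\widehat R_\theta\widehat\Pi_\theta$ across the pairing by taking adjoints. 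The key observation is that besides (\ref{resred2}) one has the companion identity
\[
\widehat\Pi_\theta\widehat R_\theta(z)\widehat\Pi_\theta \;=\; B_1^*(z)\,\widehat\Pi_\theta\widehat R_0^\theta(z)\widehat\Pi_\theta,
\]
obtained from (\ref{resred2}) at $(-\theta,\bar z)$ by taking adjoints. Using one form on each side of the bracket yields
\[
F_{1,1}'(z)=\bigl\langle\, B_1^*(z)\,\widehat\Pi_\theta\widehat R_0^\theta(z)\widehat\Pi_\theta\,\mathcal{W}_\theta\phi_1^\theta,\;\; B_1(z)^*\,\widehat\Pi_{-\theta}\widehat R_0^{-\theta}(\bar z)\widehat\Pi_{-\theta}\,(\mathcal{W}_\theta)^*\phi_1^{-\theta}\,\bigr\rangle,
\]
and now on \emph{each} side $\widehat R_0^{\pm\theta}$ acts on a vector with zero first component, so again only $R_2^{\pm\theta}=\mathcal{O}(1)$ is involved. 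The moral is to keep the symmetric form of $F_{1,1}$ and exploit it \emph{after} differentiating, rather than first reducing to the asymmetric explicit formula and then trying to repair the resulting loss.
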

\begin{proof} Using (\ref{resred2}), we have,
$$
\begin{aligned}
F_{1,1}(z) &= \la {\mathcal W}_{\theta} \widehat\Pi_\theta \widehat R_0^\theta (z)\widehat\Pi_\theta B_1(z) {\mathcal W}_\theta \phi_1^\theta, \phi_1^{-\theta}\ra\\
&= \la   B_1(z) {\mathcal W}_\theta \phi_1^\theta, \widehat\Pi_{-\theta} \widehat R_0^{-\theta} (\overline{z})\widehat\Pi_{-\theta}({\mathcal W}_{\theta})^*\phi_1^{-\theta}\ra,
\end{aligned}
$$
and since,
$$
({\mathcal W}_{\theta})^*\phi_1^{-\theta}=\left(\begin{array}{c}
0\\ W_{-\theta}u_j^{-\theta},
\end{array}\right)
$$
we have,
\be
\label{eq7.5}
\widehat\Pi_{-\theta} \widehat R_0^{-\theta} (\overline{z})\widehat\Pi_{-\theta}({\mathcal W}_{\theta})^*\phi_1^{-\theta}=\left(\begin{array}{c}
0\\ R_2^{-\theta}(\overline{z})W_{-\theta}u_j^{-\theta},
\end{array}\right)
\ee
Hence, $\|\widehat\Pi_{-\theta} \widehat R_0^{-\theta} (\overline{z})\widehat\Pi_{-\theta}({\mathcal W}_{\theta})^*\phi_1^{-\theta}\|_{L^2}={\mathcal O}(1)$, and since also
$\|B_1(z)\|_{{\mathcal L}(L^2)} ={\mathcal O}(1)$, we deduce,
$$
F_{1,1}(z)={\mathcal O}(1).
$$
(Here, we have used the fact that $\|u_j^{\pm\theta}\|_{L^2}={\mathcal O}(1)$.)

On the other hand, taking the derivate with respect to $z$, we obtain,
$$
F_{1,1}'(z)=\la {\mathcal W}_\theta \widehat\Pi_\theta\widehat R_\theta (z)^2\widehat\Pi_\theta {\mathcal W}_\theta \phi_1^\theta, \phi_1^{-\theta}\ra
$$
Then, applying (\ref{resred2}) with $\theta$ replaced by $-\theta$, and $z$ replaced by $\overline{z}$, and then taking the adjoint, we obtain,
\be
\label{resred2'}
\widehat\Pi_\theta\widehat R_\theta (z)\widehat\Pi_\theta=B^*_1(z)\widehat\Pi_\theta\widehat R_0^\theta (z)\widehat\Pi_\theta ,
\ee
with $B^*(z) =I+{\mathcal O}(h^2/a)$ in ${\mathcal L}(L^2)$. Using both (\ref{resred2}) and (\ref{resred2'}), we are led to,
$$
F_{1,1}'(z)=\la B_1^*(z) \widehat\Pi_\theta\widehat R_0^\theta (z)\widehat\Pi_\theta {\mathcal W}_\theta \phi_1^\theta,B_1(z)^* \widehat\Pi_{-\theta}\widehat R_0^{-\theta} (\overline{z})\widehat\Pi_{-\theta} ({\mathcal W}_\theta)^* \phi_1^{-\theta}\ra.
$$
Thus, we can conclude as before (see (\ref{eq7.5})) that $F_{1,1}'(z)={\mathcal O}(1)$.
\end{proof}

As a consequence, we obtained the following generalization of the result of \cite{CGH}:
\begin{theorem}\sl 
\label{genCGH1}
Suppose Assumptions 1-3,  (\ref{gap2}), and $m=1$. Then, the resonance $\rho_1(h)$ of $H$ that is the closest one to $\lambda_1(h)$ satisfies,
$$
\rho_1(h) =\lambda_1(h) -h^2F_{1,1}(\lambda_1(h)) + {\mathcal O}(h^4),
$$
uniformly for $h>0$ small enough. Here, $F_{1,1}(z)$ is defined in (\ref{Fjk}).
\end{theorem}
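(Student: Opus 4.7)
The plan is to reduce Theorem \ref{genCGH1} to a scalar equation via Proposition \ref{genCGH} and then exploit the sharp bounds on $F_{1,1}$ and $F_{1,1}'$ that have just been established in the preceding Lemma. Since $m=1$, the determinantal equation characterizing the resonances in $\Omega(h)$ collapses to the single scalar relation
$$
\rho_1 - \lambda_1 + h^2 F_{1,1}(\rho_1) = 0,
$$
so the task is to solve this equation asymptotically in $h$.

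First I would record the a priori localization of $\rho_1$: from the Grushin analysis of Section \ref{grushin}, together with (\ref{estM1}) and the perturbation of $\Lambda$ by $M(z)={\mathcal O}(h^2)$, one already has $\rho_1 = \lambda_1 + {\mathcal O}(h^2)$, and in particular $\rho_1 \in \Omega(h)$ for $h$ small. This puts us in the regime where the preceding Lemma applies: both $F_{1,1}$ and $F_{1,1}'$ are ${\mathcal O}(1)$ uniformly in $\Omega(h)$, a bound crucially better than the ${\mathcal O}(a(h)^{-1})$ and ${\mathcal O}(a(h)^{-2})$ one would naively read off from the presence of the reduced resolvent in (\ref{Fjk}).

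Next I would perform a one-step Taylor expansion of $F_{1,1}$ around $\lambda_1$: writing
$$
F_{1,1}(\rho_1) = F_{1,1}(\lambda_1) + (\rho_1-\lambda_1)\int_0^1 F_{1,1}'(\lambda_1 + t(\rho_1-\lambda_1))\,dt,
$$
and using the Lemma together with $\rho_1-\lambda_1={\mathcal O}(h^2)$, the remainder integral is ${\mathcal O}(h^2)$, so $F_{1,1}(\rho_1) = F_{1,1}(\lambda_1) + {\mathcal O}(h^2)$. Substituting this back into the scalar equation gives
$$
\rho_1 = \lambda_1 - h^2 F_{1,1}(\rho_1) = \lambda_1 - h^2 F_{1,1}(\lambda_1) + {\mathcal O}(h^4),
$$
which is the desired estimate.

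The only genuine obstacle in this argument is the uniform boundedness of $F_{1,1}$ and $F_{1,1}'$ on $\Omega(h)$, and that has already been taken care of by the preceding Lemma (via the identity (\ref{resred2}), its adjoint form (\ref{resred2'}), and the fact that ${\mathcal W}_\theta$ applied to $\phi_1^{\pm\theta}$ lands entirely in the second component so that only the $R_2^{\pm\theta}$-resolvent, which is ${\mathcal O}(1)$, actually acts). Once that is in hand, the proof of the theorem is essentially a single application of Taylor's formula combined with the a priori bound $\rho_1-\lambda_1={\mathcal O}(h^2)$ coming from Section \ref{grushin}.
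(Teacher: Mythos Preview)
Your argument is correct and follows essentially the same route as the paper: the theorem is presented there as an immediate consequence of the preceding Lemma, obtained by inserting the bounds $F_{1,1},F_{1,1}'={\mathcal O}(1)$ into the scalar equation $\rho_1=\lambda_1-h^2F_{1,1}(\rho_1)$ together with the a priori localization $\rho_1-\lambda_1={\mathcal O}(h^2)$. The one-step Taylor expansion you write out is exactly what the paper leaves implicit.
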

%%%%%%%%%%%%%%%%%%%%%%%%%
\section{Estimates on the residues}
\label{secResidu}

Going back to (\ref{stone2}), and using (\ref{resolv}), we deduce,
\be
\label{resid1}
b_j(\varphi,h)={\rm Residue}_{z=\rho_j}\la E_+(z)\left( E_{-+}(z)\right)^{-1}E_-(z)\varphi_\theta, \varphi_{-\theta}\ra.
\ee
Since $\varphi \in {\rm Span}(\phi_1,\dots,\phi_m)$, it can be written as,
\be
\varphi =\sum_{j=1}^m \alpha_j\phi_j,
\ee
($\alpha_j\in \C$), and thus we see on (\ref{resred1}) that we actually have,
$$
E_-(z)\varphi_\theta = L_+\varphi_\theta = (\alpha_1,\dots,\alpha_m).
$$
In a similar way, since $\widehat\Pi_\theta^* =\widehat\Pi_{-\theta}$, we also find,
$$
E_+(z)^*\varphi_{-\theta}= (\alpha_1,\dots,\alpha_m).
$$
Inserting into (\ref{resid1}), and setting,
$$
\alpha_\varphi:= (\alpha_1,\dots,\alpha_m)\in \C^m,
$$
we obtain,
\be
\label{resid2}
b_j(\varphi,h)={\rm Residue}_{z=\rho_j}\la  E_{-+}(z)^{-1}\alpha_\varphi, \alpha_\varphi\ra_{\C^m}.
\ee
Therefore, using (\ref{G-1H0})-(\ref{estM1}), we deduce (\ref{estM})-(\ref{aj=residu}).

Now, assuming that the $\lambda_j$'s are simple and that (\ref{gaplambda}) is satisfied,
we write,
\be
\label{M0M1'}
E_{-+}(z) = (z-\Lambda+M_0(z))\left ( 1 +(z-\Lambda+M_0(z))^{-1}M_1(z)\right).
\ee
Moreover, using (\ref{resid2}) and denoting by $\gamma_j$ the oriented boundary of the disc centered in $\lambda_j$ of radius $\widetilde a(h)/2$, we have,
\be
\label{intcont}
b_j(\varphi, h)=\frac1{2i\pi}\int_{\gamma_j} \la  E_{-+}(z)^{-1}\alpha_\varphi, \alpha_\varphi\ra dz.
\ee

When $z\in\gamma_j$,  we have $\|(z-\Lambda)^{-1}\| ={\mathcal O}(\widetilde a^{-1})$ and thus, using (\ref{gaplambda}),
$$
 \| (z-\Lambda+M_0(z))^{-1}\| =\| (1+(z-\Lambda)^{-1}M_0(z))^{-1}(z-\Lambda)^{-1}\| ={\mathcal O}(\widetilde a^{-1}).
$$
Moreover, using (\ref{estM1}), we have,
$$
\| (z-\Lambda+M_0(z))^{-1}M_1(z)\| ={\mathcal O}(h^4/(a\widetilde a))=o(1),
$$
and thus,  by (\ref{M0M1'}), for $z\in \gamma_j$,
$$
\begin{aligned}
E_{-+}(z)^{-1}&=\left(1+{\mathcal O}(h^4/(a\widetilde a)\right)(z-\Lambda - M_0(z))^{-1}\\
&=(z-\Lambda +M_0(z))^{-1}+{\mathcal O}(h^4/(a\widetilde a^2)),
\end{aligned}
$$
and thus, since the length of $\gamma_j$ is ${\mathcal O}( \widetilde a)$,
\be
\label{EST1'}
\int_{\gamma_j} \la  E_{-+}(z)^{-1}\alpha_\varphi, \alpha_\varphi\ra dz =\int_{\gamma_j}\la (z-\Lambda +M_0(z))^{-1}\alpha_\varphi, \alpha_\varphi\ra dz +{\mathcal O}\left(\frac{h^4}{a\widetilde a}\right)\|\varphi\|^2.
\ee

On the other hand, we see on its definition that we have,
$$
M_0(z) = h^2\left( \la W_\theta R_2^\theta(z) W_\theta^* u_k^\theta , u_j^{-\theta}\ra \right)_{1\leq j,k\leq m},
$$
and, introducing the operator $\widetilde P_2:=-h^2\Delta + \widetilde V_2$ where $\widetilde V_2$ is as in (\ref{puitsbouche}), the exponential decay of $u_j^{\pm\theta}$ away from $U$ and Agmon estimates (see \cite{HeSj2}) show that,
$$
\la W_\theta R_2^\theta W_\theta^* u_k^\theta , u_j^{-\theta}\ra =\la W \widetilde R_2(z) W^* u_k , u_j\ra +{\mathcal O}(e^{-\delta /h}),
$$
for some constant $\delta >0$, and with $\widetilde R_2(z):=(\widetilde P_2 -z)^{-1}$. Setting
$$
\widetilde M_0(z):=\left( \la W_\theta R_2^\theta W_\theta^* u_k^\theta , u_j^{-\theta}\ra =\la W \widetilde R_2(z) W^* u_k , u_j\ra\right)_{1\leq j,k\leq m},
$$
we deduce as before,
\be
\label{EST2}
\int_{\gamma_j} \la  E_{-+}(z)^{-1}\alpha_\varphi, \alpha_\varphi\ra dz =\int_{\gamma_j}\la (z-\Lambda + \widetilde M_0(z))^{-1}\alpha_\varphi, \alpha_\varphi\ra dz +{\mathcal O}\left(\frac{h^4}{a\widetilde a}\right)\|\varphi\|^2,
\ee
where the matrix $\widetilde M_0(z)$ is ${\mathcal O}(h^2)$, depends analytically on $z\in\Omega (h)$, and is selfadjoint when $z$ is real. As a consequence, thanks to the gap condition on the $\lambda_j$'s, we see that the matrix $\Lambda -\widetilde M_0(z)$ can be diagonalized in a basis $(e_1(z), \dots, e_m(z))$ of $\C^m$, that depends analytically on $z\in \Omega (h)$, is orthonormal when $z$ is real, and the corresponding change of basis is given by a matrix $A(z)$ satisfying,
$$
\begin{aligned}
& {}^t A(z) A(z) =I_{\C^m};\\
& A(z)=I_{\C^m}+{\mathcal O}(h^2);\\
& {}^t A(z)(z-\Lambda +\widetilde M_0(z))^{-1}A(z) ={\rm diag}\left( \frac1{z-\mu_1(z)},\dots,\frac1{z-\mu_m(z)}\right),
\end{aligned}
$$
where the eigenvalues $\mu_1(z),\dots,\mu_m(z)$ of $\Lambda -\widetilde M_0(z)$ satisfy,$$
\mu_j (z) =\lambda_j + f_j(z)
$$
with $f_j(z)={\mathcal O}(h^2)$. Note that  $f_j$ are  real on the real. Since
$$
\frac{d}{dz} \widetilde M_0(z)={\mathcal O}(h^2),
$$
we see by a standard Hellmann-Feynman argument that, in this situation, we also have,
$$
\mu_j'(z)=f_j'(z) ={\mathcal O}(h^2).
$$
Moreover, the poles $\widetilde \lambda_1,\dots,\widetilde \lambda_m$ of $\la (z-\Lambda - \widetilde M_0(z))^{-1}\alpha_\varphi, \alpha_\varphi\ra $ are the solutions of an equation,
$$
z=\mu_j(z)
$$
for some $j=1,\dots,m$. Thus, they are necessarily simple, and since $\mu_j(\overline{z})=\overline{\mu_j(z)}$, they must  be real.
Finally, we obtain,
\be
\label{EST3'}
\begin{aligned}
\frac1{2i\pi}\int_{\gamma_j}\la (z-\Lambda - \widetilde M_0(z))^{-1}\alpha_\varphi, \alpha_\varphi\ra dz &= (1-f_j'(\widetilde\lambda_j))^{-1}|\alpha_j|^2+{\mathcal O}(h^2\|\varphi\|^2)\\ &=|\alpha_j|^2+{\mathcal O}(h^2\|\varphi\|^2),
\end{aligned}
\ee
and (\ref{resutres}) follows from (\ref{intcont}), (\ref{EST2}) and (\ref{EST3'}).

\section{Estimates on the rest}
\label{secRest}

We have to estimate the quantity,
\be
S_{\theta}(z):=\la R_{\theta} (z)\varphi_{\theta}, \varphi_{-\theta}\ra -\la R_{-\theta}(z)\varphi_{-\theta}, \varphi_{\theta}\ra,
\ee
for $z\in \gamma_-$ where, setting $\widetilde I=[\alpha,\beta]:= I(h)+[-a,a]$, we choose the contour $\gamma_-$ as,
$$
\gamma_-:= (\R\backslash \widetilde I)\cup (\alpha-i[0, a])\cup ([\alpha,\beta]-ia)\cup (\beta-i[0,a]).
$$
We first compute $v=(v_1,v_2):=R_{\theta} (z)\varphi_{\theta}$. Denoting by $u:=\sum_j \alpha_j u_j$ the first component of $\varphi$, we find,
$$
\begin{aligned}
& v_1= (P_1^\theta -z)^{-1}(1-T_\theta)^{-1}u_\theta \\
& v_2= -h(P_2^\theta -z)^{-1}W_\theta^*v_1,
\end{aligned}
$$
with
$$
T_\theta:=h^2W_\theta(P_2^\theta -z)^{-1}W_\theta^*(P_1^\theta -z)^{-1}={\mathcal O}(h^2/a(h)).
$$
Then, using that $(P_1 -z)^{-1}u = \sum_k \alpha_k(\lambda_k-z)^{-1}u_k$, that the $u_j$'s are orthogonal to each other, and that $z$ stays at a distance greater than $a/2$ from the $\lambda_j$'s, we deduce,
$$
\la R_{\theta} (z)\varphi_{\theta}, \varphi_{-\theta}\ra=\la v_1, u_{-\theta}\ra=\sum_{j}\frac{\vert\alpha_j\vert^2}{\lambda_j-z}
+\sum_{j,k}\frac{\alpha_j\overline{\alpha_k}}{\lambda_k-z}\la T_\theta u_j^\theta, u_k^{-\theta}\ra +{\mathcal O}(h^4a^{-3}).
$$
Using again that the $u_j$'s are eigenfunction of $P_1$, this lead us to,
$$
\begin{aligned}
\la R_{\theta} (z)\varphi_{\theta}, \varphi_{-\theta}\ra=\sum_{j}\frac{\vert\alpha_j\vert^2}{\lambda_j-z}
+h^2\sum_{j,k}\frac{\alpha_j\overline{\alpha_k}}{(\lambda_k-z)(\lambda_j-z)}&\la W_\theta R_2^\theta(z) W_\theta^*u_j^\theta, u_k^{-\theta}\ra\\
&  +{\mathcal O}(h^4a^{-3}\Vert\varphi\Vert^2).
\end{aligned}
$$
Here we observe that the quantity $\la W_\theta R_2^\theta(z) W_\theta^*u_j^\theta, u_k^{-\theta}\ra$ is nothing but the holomorphic continuation from $\{\Im z >0\}$ through the real axis of the function $z\mapsto \la  R_2(z) W^*u_j, W^*u_k\ra$. From now on, we denote this continuation by $\la  \widetilde R_2(z) W^*u_j, W^*u_k\ra$.

Changing $\theta$ into $-\theta$, we also find an analog expression for $\la R_{-\theta} (z)\varphi_{-\theta}, \varphi_{\theta}\ra$, and making their difference, we obtain,
\be
\label{estStheta}
S_\theta (z)=h^2\sum_{j,k}\frac{\alpha_j\overline{\alpha_k}}{(\lambda_k-z)(\lambda_j-z)}\la  (\widetilde R_2(z)-R_2(z)) W^*u_j, W^*u_k\ra +{\mathcal O}(h^4a^{-3}\Vert\varphi\Vert^2).
\ee
Multiplying by $e^{-itz}g(\Re z)$ and integrating over $\gamma_-$, we obtain the required estimate of $r(t,\varphi, h)$ in the case $\nu =0$

For the case $\nu >0$, as in \cite{CGH} we use the formula,
$$
e^{-izt} =(1+t)^{-\nu}\left( 1+i\frac{d}{dz}\right)^\nu e^{-izt},
$$
and we make $k$ integrations by parts with respect to $z$ ($0\leq k\leq \nu$). This makes appear the composition of a finite number of resolvents and additional negative powers of $\lambda_j-z$, and the estimate follows in the same way.

Moreover, setting
\be
\label{r0}
r_0(t,\varphi, h):=\frac{h^2}{2i\pi}\sum_{j,k}\int_{\gamma_-}\frac{e^{-itz}g(\Re z)}{(\lambda_k-z)(\lambda_j-z)}\la  (\widetilde R_2(z)-R_2(z)) W^*u_j, W^*u_k\ra
\ee
we see on \eqref{estStheta} that we have $r(t,\varphi, h)=r_0(t,\varphi, h)+{\mathcal O}(h^4a^{-2}\Vert\varphi\Vert^2)$. In addition, in \eqref{r0}, we can change $(\lambda_j, \lambda_k)$ into $(\lambda_j+i\varepsilon, \lambda_k+i\varepsilon')$ and take the limit as $\varepsilon, \varepsilon'\to 0_+$. Before taking this limit, we can also deform $\gamma_-$ into $\R$, and this transforms $\widetilde R_2(z)-R_2(z)$ into $\widetilde R_2(\lambda+i0)-R_2(\lambda-i0) $. By the spectral theorem, this leads to the expression \eqref{r0lim} of Remark \ref{remr0}. \qed

\section{Proof of Corollary \ref{corollaire}}
\label{secCor}
We first prove,
\begin{lemma}\sl
$$
\sum_{j=1}^m b_j(\varphi, h)=\left(1+{\mathcal O}(h^2 +h^4/a^2)\right)\|\varphi\|^2.
$$
\end{lemma}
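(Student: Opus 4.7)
The plan is to represent $\sum_j b_j(\varphi,h)$ as a contour integral and compare with the analogous representation of $\|\varphi\|^2$. By (\ref{aj=residu}) and (\ref{estM}), for any positively oriented closed contour $\gamma\subset\Omega(h)$ enclosing all the $\rho_j$'s (equivalently all $\lambda_j$'s, since $|\rho_j-\lambda_j|=\mathcal{O}(h^2)\ll a(h)$),
\[
\sum_{j=1}^m b_j(\varphi,h) = \frac{1}{2\pi i}\oint_\gamma \la (z-\Lambda+M(z))^{-1}\alpha_\varphi, \alpha_\varphi\ra_{\C^m}\, dz,
\]
while the residue theorem for $(z-\Lambda)^{-1}$ (whose residues at $\lambda_j$ are the basis projectors, summing to $I_{\C^m}$) gives
\[
\|\varphi\|^2 = \|\alpha_\varphi\|^2 = \frac{1}{2\pi i}\oint_\gamma \la (z-\Lambda)^{-1}\alpha_\varphi, \alpha_\varphi\ra_{\C^m}\, dz.
\]
I would take $\gamma$ to be the boundary of the rectangle $(I(h)+[-3a/2,3a/2])+i[-\varepsilon_1/2,\varepsilon_1/2]$, for which $\|(z-\Lambda)^{-1}\|=\mathcal{O}(1/a)$ on the vertical sides and $\mathcal{O}(1)$ on the horizontal sides.

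Subtracting the two identities and iterating the resolvent identity produces $\sum_j b_j(\varphi,h)-\|\varphi\|^2=-T_1+T_2$, with
\[
T_1=\frac{1}{2\pi i}\oint_\gamma \la (z-\Lambda)^{-1}M(z)(z-\Lambda)^{-1}\alpha_\varphi, \alpha_\varphi\ra\, dz
\]
and $T_2$ the triple-resolvent remainder containing one more factor of $M(z)(z-\Lambda+M(z))^{-1}$. The estimate $|T_2|=\mathcal{O}(h^4/a^2)\|\varphi\|^2$ follows from $\|M(z)\|=\mathcal{O}(h^2)$, the integrated bound $\int_\gamma|dz|/|z-\lambda_j|^2=\mathcal{O}(1/a)$, and Cauchy--Schwarz applied to $\la M(z)(z-\Lambda)^{-1}\alpha_\varphi,(\overline{z}-\Lambda)^{-1}\alpha_\varphi\ra$. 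The critical point for $T_1$ is that $M(z)$ is holomorphic on $\Omega(h)$ (Lemma \ref{estM0M1}), so the integrand has poles only at the real points $\lambda_j$; evaluating by residues collapses $T_1$ to a bilinear form in $\alpha_\varphi$ with matrix $N$ whose entries are the divided differences of $M$. Writing $M=M_0+M_1$ (Lemma \ref{estM0M1}) and applying the $P_2^\theta$-resolvent identity $R_2^\theta(\lambda_j)-R_2^\theta(\lambda_k)=(\lambda_j-\lambda_k)R_2^\theta(\lambda_j)R_2^\theta(\lambda_k)$, each entry becomes
\[
N_{jk}=h^2\la W_\theta R_2^\theta(\lambda_j)R_2^\theta(\lambda_k) W_\theta^* u_k^\theta, u_j^{-\theta}\ra+\mathcal{O}(h^4/a),
\]
and the uniform bound $\|R_2^\theta(z)\|=\mathcal{O}(1)$ from (\ref{ellP2theta'}), combined with Cauchy--Schwarz and $\|u_j^{\pm\theta}\|=\mathcal{O}(1)$, yields $|\la N\alpha_\varphi,\alpha_\varphi\ra|=\mathcal{O}(h^2+h^4/a^2)\|\varphi\|^2$, which together with the $T_2$ estimate gives the claim.

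The main obstacle is avoiding the cruder $\mathcal{O}(h^2/a)$ bound on $T_1$ that a direct norm-estimate inside the contour integral would produce: one must exploit the holomorphicity of $M(z)$ to contract the integral to a finite residue sum, then recognize the divided differences as matrix elements of the \emph{bounded} operator $R_2^\theta(\lambda_j)R_2^\theta(\lambda_k)$ between vectors of bounded norm, rather than of the $1/a$-singular integrand $(z-\Lambda)^{-1}M(z)(z-\Lambda)^{-1}$ appearing in the naive estimate.
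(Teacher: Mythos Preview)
Your overall strategy is sound and parallel to the paper's, but the last step for $T_1$ has a genuine gap. After the residue computation you obtain the matrix $N^{(0)}$ (the $M_0$-contribution) with entries
\[
N^{(0)}_{jk}=h^2\bigl\la W_\theta R_2^\theta(\lambda_j)R_2^\theta(\lambda_k)W_\theta^* u_k^\theta,\; u_j^{-\theta}\bigr\ra,
\]
and you assert that Cauchy--Schwarz together with $\|R_2^\theta\|={\mathcal O}(1)$ and $\|u_j^{\pm\theta}\|={\mathcal O}(1)$ yields $|\la N^{(0)}\alpha_\varphi,\alpha_\varphi\ra|={\mathcal O}(h^2)\|\varphi\|^2$. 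This does not follow: the middle operator depends on \emph{both} indices, so the form does not factor as $\la Af,g\ra$ with a single bounded $A$ acting between spans of the $u_k^{\pm\theta}$. A direct Cauchy--Schwarz bound on $\bigl\|\sum_k\alpha_k R_2^\theta(\lambda_k)W_\theta^* u_k^\theta\bigr\|$, using only that each summand has norm ${\mathcal O}(1)$, gives ${\mathcal O}(\sqrt m\,\|\alpha_\varphi\|)$ and hence $|\la N^{(0)}\alpha_\varphi,\alpha_\varphi\ra|={\mathcal O}(mh^2)\|\varphi\|^2$, which is useless since $m={\mathcal O}(h^{-n})$. The same problem hits the $M_1$-correction: an entrywise ${\mathcal O}(h^4/a)$ bound on the divided differences does not control the quadratic form when $m$ is large.

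What is missing is precisely the ``key observation'' the paper invokes: $M_0(z)$ extends holomorphically, with the same ${\mathcal O}(h^2)$ matrix-norm bound, to an $h$-\emph{independent} neighbourhood of $I(h)$. The paper uses this by deforming the whole contour for $\oint(z-\Lambda+M_0)^{-1}$ to fixed positive distance from the $\lambda_j$'s, where $\|(z-\Lambda)^{-1}\|={\mathcal O}(1)$ and the ${\mathcal O}(h^2)$ bound is immediate. In your framework the same fact can be used to repair the argument: writing $R_2^\theta(\lambda_k)=(2\pi i)^{-1}\oint_C R_2^\theta(w)(w-\lambda_k)^{-1}dw$ on a fixed contour $C$ around $I(h)$, one gets
\[
\sum_k\alpha_k R_2^\theta(\lambda_k)W_\theta^* u_k^\theta=\frac1{2\pi i}\oint_C R_2^\theta(w)W_\theta^*\Bigl(\sum_k\frac{\alpha_k}{w-\lambda_k}u_k^\theta\Bigr)dw,
\]
and now the inner sum has $L^2$-norm ${\mathcal O}(\|\alpha_\varphi\|)$ by the quasi-orthonormality of the $u_k^\theta$ (the paper's Appendix~2) together with $|w-\lambda_k|\ge\delta>0$ on $C$. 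With this ingredient your bound goes through; without it, it does not.
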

\begin{proof}
We write,
\be
\label{M0M1}
E_{-+}(z) = (z-\Lambda+M_0(z))\left ( 1 +(z-\Lambda+M_0(z))^{-1}M_1(z)\right),
\ee
and, using (\ref{resid2}) and denoting by $\gamma$ the oriented boundary of the rectangle $\{z\in\C\, ;\, \Re z \in I(h)+[-a(h), a(h)],\, |\Im z|\leq \varepsilon_1\}$, we have,
\be
\label{intcont'}
\sum_{j=1}^m b_j(\varphi, h)=\frac1{2i\pi}\int_\gamma \la  E_{-+}(z)^{-1}\alpha_\varphi, \alpha_\varphi\ra dz.
\ee
We divide $\gamma$ into its vertical part $\gamma^{\mathbf v}$ and its horizontal one $\gamma^{\mathbf h}$.

When $z\in\gamma^{\mathbf h}$, since $z$ remains at a distance $\varepsilon_1$ of $\R$, we have $\|(z-\Lambda)^{-1}\| ={\mathcal O}(1)$ and thus,
$$
 \| (z-\Lambda+M_0(z))^{-1}\| =\| (1+(z-\Lambda)^{-1}M_0(z))^{-1}(z-\Lambda)^{-1}\| ={\mathcal O}(1).
$$
Moreover, still for $z\in\gamma^{\mathbf h}$, we see on (\ref{K}) that $K(z)={\mathcal O}(h^2)$, and thus, by (\ref{B2}) and (\ref{M1}), $\|M_1(z)\|={\mathcal O}(h^4)$. As a consequence
$$
\| (z-\Lambda+M_0(z))^{-1}M_1(z)\| ={\mathcal O}(h^4), \quad (z\in \gamma^{\mathbf h}).
$$
Therefore, by (\ref{M0M1}), for such $z$ we can write,
$$
\begin{aligned}
E_{-+}(z)^{-1}&=\left(1+{\mathcal O}(h^4)\right)(z-\Lambda - M_0(z))^{-1}\\
&=(z-\Lambda - M_0(z))^{-1}+{\mathcal O}(h^4),
\end{aligned}
$$
and thus,
\be
\label{EST1}
\int_{\gamma^{\mathbf h}} \la  E_{-+}(z)^{-1}\alpha_\varphi, \alpha_\varphi\ra dz =\int_{\gamma^{\mathbf h}}\la (z-\Lambda - M_0(z))^{-1}\alpha_\varphi, \alpha_\varphi\ra dz +{\mathcal O}(h^4)\|\varphi\|^2.
\ee

On the other hand, when $z\in\gamma^{\mathbf v}$, we can write $z=z_1 +iz_2$ with $z_1,z_2\in\R$, ${\rm dist}(z_1, I(h))=a(h)$, $|z_2|\leq \varepsilon_1$. Therefore, for such $z$ we have,
$\| (z-\Lambda+M_0(z)))^{-1}\|  ={\mathcal O}((a+|z_2|)^{-1})$, $\|K(z)\|={\mathcal O}(h^2(a+|z_2|)^{-1})$, and $\| M_1(z)\|={\mathcal O}(h^4(a+|z_2|)^{-1})$. Proceeding as before, we deduce,
$$
E_{-+}(z)^{-1}=(z-\Lambda - M_0(z))^{-1}+{\mathcal O}(h^4/(a+|z_2|)^3)\|\varphi\|^2,
$$
and thus, integrating in $z_2$ on $[-\varepsilon_1,\varepsilon_1]$, 
\be
\label{EST2'}
\int_{\gamma^{\mathbf v}} \la  E_{-+}(z)^{-1}\alpha_\varphi, \alpha_\varphi\ra dz =\int_{\gamma^{\mathbf v}}\la (z-\Lambda - M_0(z))^{-1}\alpha_\varphi, \alpha_\varphi\ra dz +{\mathcal O}(h^4/a(h)^2)\|\varphi\|^2.
\ee
We deduce from (\ref{EST1})-(\ref{EST2'}),
\be
\label{EST3}
\int_{\gamma} \la  E_{-+}(z)^{-1}\alpha_\varphi, \alpha_\varphi\ra dz =\int_{\gamma}\la (z-\Lambda -M_0(z))^{-1}\alpha_\varphi, \alpha_\varphi\ra dz +{\mathcal O}(h^4/a(h)^2)\|\varphi\|^2.
\ee
At this point, we make the key observation that, by definition, $M_0(z)$ extends analytically in some $h$-independent complex neighborhood of $I(h)$, where it is ${\mathcal O}(h^2)$ in norm. As a consequence, modifying the complex contour $\gamma$ into another one that stays at some fix positive distance from $I(h)$, we deduce from (\ref{EST3}), 
\be
\label{ESTfin}
\int_{\gamma} \la  E_{-+}(z)^{-1}\alpha_\varphi, \alpha_\varphi\ra dz =\int_{\gamma}\la (z-\Lambda )^{-1}\alpha_\varphi, \alpha_\varphi\ra dz +{\mathcal O}(h^2 +h^4/a(h)^2)\|\varphi\|^2.
\ee
Going back to (\ref{intcont'}), this gives,
$$
\sum_{j=1}^m b_j(\varphi, h)=\sum_{j=1}^m |\alpha_j|^2+{\mathcal O}(h^2 +h^4/a(h)^2)\|\varphi\|^2 =\left(1+{\mathcal O}(h^2+h^4/a^2)\right)\|\varphi\|^2,
$$
and (\ref{resutres}) is proved.
\end{proof}

Now, applying Theorem \ref{mainth} with $t=0$, we obtain,
$$
\la g(H)\varphi, \varphi\ra = \sum_{j=1}^m b_j(\varphi, h) +{\mathcal O}(e^{-M/h})
$$
and thus, by the previous lemma,
$$
\la g(H)\varphi, \varphi\ra = \|\varphi\|^2 +{\mathcal O}(h^2+h^4/a^2)\|\varphi\|^2.
$$
Hence,
\be
\label{1-g}
\la (1-g(H))\varphi, \varphi\ra ={\mathcal O}(h^2+h^4/a^2)\|\varphi\|^2,
\ee
and we can chose $g$ in such a way that $0\leq g\leq 1$. In that case, (\ref{1-g}) can be re-written as,
$$
\| (1-g(H))^{\frac12}\varphi \|^2={\mathcal O}(h^2+h^4/a^2)\|\varphi\|^2,
$$
and Corollary \ref{corollaire} follows by writing,
$$
\begin{aligned}
\la e^{-itH}\varphi, \varphi\ra &=\la e^{-itH}g(H)\varphi, \varphi\ra+\la e^{-itH}(1-g(H))\varphi, \varphi\ra \\
&=\la e^{-itH}g(H)\varphi, \varphi\ra+\la e^{-itH}(1-g(H))^{\frac12}\varphi, (1-g(H))^{\frac12}\varphi\ra \\
&=\la e^{-itH}g(H)\varphi, \varphi\ra+{\mathcal O}(\| (1-g(H))^{\frac12}\varphi \|^2).
\end{aligned}
$$

\section{The non-trapping case}\label{casNT}

In the case when only Assumption [NT] is assumed (instead of Assumption [V]), the strategy of the proof is the same. However, an important ingredient for the estimates on the residues was the uniform boundedness of the resolvent of $P_2^\theta$. Therefore, in order to generalize this proof
one  needs a framework where  $(P_2-z)^{-1}$ becomes bounded uniformly with respect to $h$. This is provided by the theory of resonances developed by Helffer and Sj\"ostrand in \cite{HeSj2}. Without entering too much into details, let us just recall that this theory consists in changing $L^2(\R^n)$ into a space ${\mathcal H}_{\theta G}$, that contains $C_0^\infty (\R^n)$, and that depends on a positive small enough parameter $\theta$ and a function $G\in C^\infty (\R^{2n} ;\R)$ supported near $\{ p_2=0\}$ (where $p_2(x,\xi):=\xi^2 + V_2(x)$), and satisfying,
$$
| p_2 (x,\xi) -i\theta H_{p_2}G(x,\xi)| \geq \frac{\theta}{C}\la\xi\ra^2,
$$
for some constant $C>0$. Then, one has,
\be
\label{ellP2theta}
\| (P_2 -z)^{-1}\|_{{\mathcal L}({\mathcal H}_{\theta G})} ={\mathcal O}(1/\theta),
\ee
uniformly with respect to $h>0$ small enough and $z$ close to 0.
Let us also recall that pseudodifferential operators with analytic symbols on complex sectors can act on ${\mathcal H}_{\theta G}$, and their representation involves the restriction of their symbol to the complex Lagrangian manifold,
$$
\Lambda_{\theta G}:=\{ (x+i\theta \partial_\xi G(x,\xi), \xi -i\theta\partial_xG(x,\xi))\,;\, (x,\xi)\in \R^{2n}\}.
$$
Moreover, a whole symbolic calculus can be performed for such operators, where only the restrictions to $\Lambda_{\theta G}$ of the symbols are involved. Finally, as in the $L^2$-case, an analog of Sobolev spaces can be introduced by inserting a weight, and we denote by ${\mathcal H}_{\theta G}^2$ the analog of $H^2(\R^n)$ in this context. In particular, we have,
$$
P_1\, ,\, P_2\, : {\mathcal H}_{\theta G}^2\to {\mathcal H}_{\theta G}.
$$ 
Then, setting $D_{\theta G}:= {\mathcal H}_{\theta G}^2\times {\mathcal H}_{\theta G}^2$ and $\widetilde{\mathcal H}_{\theta G}:= {\mathcal H}_{\theta G}\times {\mathcal H}_{\theta G}$, 
we consider the two Grushin problems ${\mathcal G}(z)$ and ${\mathcal G}_0(z)$ as in Section \ref{grushin}, but this time without distortion, as operators : $D_{\theta G} \times \C^m \to\widetilde{\mathcal H}_{\theta G} \times \C^m$, and with the scalar product replaced (in the definition of $L_+$) by the duality-bracket between  $\widetilde{\mathcal H}_{\theta G}$ and $\widetilde{\mathcal H}_{-\theta G}$.

Then the proof  of the estimates on the residues proceeds in the same way, in particular  the fact that $G$ is supported near $\{p_2=0\}$ (thus, away from the well $U$) makes valid  an analog of Lemma \ref{estresredG} in this context. Indeed, the norm in ${\mathcal H}_{\theta G}$ is equivalent to a weighted norm of the same type as in (\ref{normL2G}), but this time with a weight $G$ that is no more compactly supported (but still supported in a neighborhood of $\{ p_2 =0\}$): see \cite{HeSj2}, Formula (9.48). For the same reason, the estimates of Lemma \ref{estM0M1} on $M_0(z)$ and $M_1(z)$ can be generalized, too, and all of Sections \ref{secResidu} and \ref{secCor} remain valid.

The same procedure applies to  estimate the remainder term $r(t,\varphi,h)$.
\section{Examples}
\label{secexample}

\subsection{The one dimensional case}
When $n=1$, if we assume,
$$
V_1'\not=0 \mbox{ on }\, \{ V_1=0\},
$$
then it is well known (see, e.g., {HeRo}) that the eigenvalues of $P_1$ are all simple and separated by a gap of order $h$. Then,
we can take $|I(h)|={\mathcal O}(h)$, $a=\widetilde a \sim h$ , and we also have $m={\mathcal O}(1)$. Moreover, in this case Assumption [NT] on $V_2$ is equivalent to,
$$
\begin{aligned}
& V_2'\not=0 \mbox{ on }\, \{ V_2=0\},\, \mbox{ and } \{V_2\leq 0\} \mbox{ has no }\\
& \mbox{bounded connected component}.
\end{aligned}
$$
For instance $V_2(x) =-\Gamma+\alpha (1+x^2)^{-1}$, (with $\alpha >0$  sufficiently large, so that $V_2>0$ on $\{V_1\leq 0\}$) satisfies all the assumptions (including Assumption [V]).

In such a situation, (\ref{resutres}) becomes,
\be
\label{a=h}
b_j(\varphi ,h)=|\la \varphi, \phi_j\ra|^2 +{\mathcal O}( h^2)\|\varphi\|^2,
\ee
and, with Corollary \ref{corollaire}, this gives,
\be
\label{atilde=h}
\la e^{-itH}\varphi, \varphi\ra =\sum_{j=1}^m e^{-it\rho_j}|\la \varphi, \phi_j\ra|^2+{\mathcal O}( h^2 )\|\varphi\|^2.
\ee

\subsection{The non-degenerate point-well}
In addition to Assumption 1, let us suppose,
$$
U=\{0\},\, {\rm Hess}\, V_1 (0) >0.
$$
Then, it is well known (see \cite{HeSj1, Si}) that the spectrum of $P_1$ near 0 consists of eigenvalues admitting asymptotic expansions as $h\to 0_+$, of the form,
$$
\lambda_j(h) \sim \sum_{k\geq 0} \lambda_{j,k}h^{1+\frac{k}2},
$$
where $\lambda_{j,0}$ is the $j$-th eigenvalue of the harmonic oscillator $-\Delta +\frac12\la  {\rm Hess}\, V_1 (0) x,x\ra$.

As for $V_2$, one can take $V_2(x) =-\Gamma+\alpha (1+x^2)^{-1}$ with $\alpha, \Gamma >0$ arbitrary. Then Assumption [V] is satisfied, and choosing $I(h)=[0, Ch]$ with $C\notin \{\lambda_{j,0}\, ;\, j\geq 1\}$, we see that the general assumptions of Theorem \ref{mainth} are satisfied with $a(h)\sim h$. Thus, (\ref{a=h}) remains valid in this case. 

Moreover, in the case $n=1$, all the $\lambda_{j,0}$'s are simple, and thus so are the $\lambda_j$'s, with a gap $\widetilde a \sim h$, and (\ref{atilde=h}) is valid, too.

When $n\geq 2$, some $\lambda_{j,0}$ may have some multiplicity. This is for instance the case if we take $n=2$ and $V_1(x_1,x_2) = x_1^2 +4x_2^2 +x_1^2x_2 + {\mathcal O}(|x|^4)$ uniformly near 0. Then (see \cite{HeSj1}, end of Section 3), the asymptotic of the first eigenvalues of $P_1$ can be computed, and one finds,
$$
\begin{aligned}
&\lambda_1(h) = 3h +{\mathcal O}(h^2);\\
&\lambda_2(h) = 5h +{\mathcal O}(h^{\frac32});\\
&\lambda_3(h) = 7h -\alpha h^{\frac32} +{\mathcal O}(h^2);\\
&\lambda_4(h) = 7h+\alpha h^{\frac32}  +{\mathcal O}(h^2);\\
&\lambda_5(h) = 9h +{\mathcal O}(h^{\frac32}),
\end{aligned}
$$
with $\alpha:=\int y_1^2y_2 v_1(y_1)w_2(y_2)v_3(y_1)w_1(y_2) dy_1dy_2 >0$, where $v_j$ stands for the normalized $j$-th eigenfunction of $-d_{y_1}^2 + y_1^2$, and $w_j$  for the normalized $j$-th eigenfunction of $-d_{y_2}^2 + 4y_2^2$.

Thus, we can apply Theorem \ref{mainth} with $I(h)=[0, 8h]$, $a(h)=h/2$, and $\widetilde a(h) = 2\alpha h^{\frac32}$.

\section{Appendix}
\subsection{Appendix 1: Proof of Lemma \ref{estresredG}}

We do it for $P_1^\theta$ only, since the sign of $\theta$ is not involved in the proof. Let $\eta, \psi, \chi \in C_0^\infty (\R^n)$ be such that,
$$
\begin{aligned}
& \inf_{\R^n} (V_1+\eta ) >0;\\
&\psi =1 \mbox{ in a neighborhood of } {\rm Supp }\,\eta;\\
& \chi =1 \mbox{ in a neighborhood of } {\rm Supp }\,\psi;\\
& {\rm Supp }\,\chi\,\subset \,\R^n\backslash {\rm Supp}\, F .
\end{aligned}
$$
We denote by,
$$
\widetilde P_1^\theta := P_1^\theta + \eta
$$
the perturbation of $P_1^\theta$ where the well $U$ has been filled with $\eta$ (the so-called ``filled-well'' operator). By analogy with a technique used in \cite{HeSj2}, Section 9 (in particular Formula (9.22)), we consider the operator,
$$
X(z):= \chi (\widehat P_1^\theta -z)^{-1}\psi + (\widetilde P_1^\theta -z)^{-1}(1-\psi).
$$
By a straightforward computation, we have,
\be
\label{approxresred}
(P_1^\theta -z)\widehat\Pi_1^\theta X(z)\widehat\Pi_1^\theta =\widehat\Pi_1^\theta +Y(z),
\ee
with,
$$
Y(z):=\widehat\Pi_1^\theta \left(-\chi \Pi_1^\theta \psi+[P_1^\theta,\chi](\widehat P_1^\theta -z)^{-1}\psi -\eta (\widetilde P_1^\theta -z)^{-1}(1-\psi )\right)\widehat\Pi_1^\theta.
$$
Then, denoting by $d_1$ the Agmon distance associated with $V_1$, one observes that both $d_1({\rm Supp}\,\nabla\chi, {\rm Supp}\,\psi)$ and $d_1( {\rm Supp}\,\eta, {\rm Supp}\,(1-\psi)$ are positive numbers. Therefore, one can apply e.g. the Propositions 9.3 and 9.4 in \cite{HeSj2} (or, more directly, Agmon estimates on $P_1^\theta$, uniformly with respect to $\theta$) to deduce the existence of some $\delta_1 >0$, independent of $\theta$, such that,
\be
\label{estA1}
\| [P_1^\theta,\chi](\widehat P_1^\theta -z)^{-1}\psi -\eta (\widetilde P_1^\theta -z)^{-1}(1-\psi )\|_{{\mathcal L}(L^2)}={\mathcal O}(e^{-2\delta_1/h}).
\ee
Moreover, since $\widehat\Pi_1^\theta \Pi_1^\theta =\Pi_1^\theta \widehat\Pi_1^\theta  =0$, we have,
$$
\widehat\Pi_1^\theta \left(\chi \Pi_1^\theta \psi)\right)\widehat\Pi_1^\theta =\widehat\Pi_1^\theta \left((\chi-1) \Pi_1^\theta (\psi-1))\right)\widehat\Pi_1^\theta,
$$
and Agmon estimates on $P_1^\theta$ show the existence of $\delta_2>0$, still independent of $\theta$, such that, for all $j=1,\dots,m$, one has,
$$
\|(1-\psi)u^\theta_j\|_{L^2}={\mathcal O}(e^{-2\delta_2/h}),
$$
and therefore, since $m(h)={\mathcal O}(h^{-n})$,
\be
\label{estA2}
\| \widehat\Pi_1^\theta \left(\chi \Pi_1^\theta \psi)\right)\widehat\Pi_1^\theta\|_{{\mathcal L}(L^2)}={\mathcal O}(e^{-\delta_2/h}).
\ee
From (\ref{estA1})-(\ref{estA2}), we obtain,
$$
\| Y(z)\|_{{\mathcal L}(L^2)}={\mathcal O}(e^{-\delta_3/h}),
$$
for some constant $\delta_3>0$. Going back to (\ref{approxresred}), we deduce,
\be
\label{resredP11}
(\widehat P_1^\theta -z)^{-1}=\widehat\Pi_1^\theta X(z)\widehat\Pi_1^\theta (1+ {\mathcal O}(e^{-\delta_3/h})).
\ee
On the other hand, since the distortion coincides with the identity on the supports of $\chi$ and of $\psi$, we have,
$$
X(z):= \chi (\widehat P_1 -z)^{-1}\psi + (\widetilde P_1^\theta -z)^{-1}(1-\psi),
$$
and by construction $\|(\widetilde P_1^\theta -z)^{-1}\| ={\mathcal O}(1)$ and $\| (\widehat P_1 -z)^{-1}\|={\mathcal O}(1/a)$.
Hence, by (\ref{resredP11}), Lemma \ref{estresredG} follows.

\subsection{Appendix 2: Proof of Lemma \ref{estM0M1}}

In view of (\ref{estQ1Q2}), (\ref{K}), it is enough to prove that, if $A$ is a bounded operator on $L^2(\R^n)$, then the matrix $M_A:= (\la Au_k^\theta,u_j^{-\theta}\ra_{L^2(\R^n)})_{1\leq j,k\leq m}$ satisfies,
\be
\label{L+AL-}
\| M_A\|_{{\mathcal L}(\C^m)} ={\mathcal O}(\| A\|_{{\mathcal L}(L^2)}),
\ee
uniformly with respect to $h>0$ small enough. In order to prove (\ref{L+AL-}), we take $\alpha =(\alpha_1,\dots,\alpha_m)\in \C^m$, and we  write,
$$
\| M_A\alpha \|^2 =\sum_{j=1}^m |\sum_{k=1}^m \alpha_k\la Au_k^\theta,u_j^{-\theta}\ra_{L^2(\R^n)}|^2=\sum_{j=1}^m |\la A\widetilde\alpha ,u_j^{-\theta}\ra_{L^2(\R^{n})}|^2,
$$
where $\widetilde\alpha:=\sum_{k=1}^m \alpha_ku_k^\theta$. Then, we denote by $D\subset \R^n$ and open set such that 
$$
U\subset\, D\subset\, \R^n\backslash {\rm Supp}\, F,
$$
In particular, on $D$ we have $u_k^{\pm \theta} =u_k$, and, by Agmon estimates, we know that the norms $\| u_k^{\pm\theta}\|_{L^2(\R^n\backslash D)}$ are exponentially small, uniformly with respect to $\theta$.  Therefore, since $m={\mathcal O}(h^{-n})$, we can write,
\be
\label{estAnn3}
\| M_A\alpha \|^2=\sum_{j=1}^m |\la A\widetilde\alpha ,u_j\ra_{L^2(D)}|^2 +{\mathcal O}(e^{-c /h})\| A\widetilde \alpha\|_{L^2}^2,
\ee
where $c >0$ is independent of $\alpha$, $\theta$, and $h$. Then, we use the fact that, for the same reason (and since $\la u_k^\theta, u_j^{-\theta}\ra_{L^2}=\delta_{j,k}$), we have,
\be
\label{quasiorth}
\la u_k, u_j\ra_{L^2( D)} =\delta_{j,k}+{\mathcal O}(e^{-c /h}),
\ee
where the positive constant $c$ may be different from the previous one. This permits us to show (e.g., by diagonalizing the family $(u_k)_{1\leq k\leq m}$ in $L^2(D)$ by means of a matrix $B=I+{\mathcal O}(e^{-\delta /h}$)) that one has,
$$
\sum_{j=1}^m |\la A\widetilde\alpha ,Tu_j\ra_{L^2(D)}|^2 ={\mathcal O}(\| A\widetilde\alpha\|_{L^2(D)}^2),
$$
uniformly with respect to $h$ and $\alpha$. Hence, inserting in (\ref{estAnn3}), we find,
$$
\| M_A\alpha \|^2 ={\mathcal O}(\| A\widetilde\alpha\|_{L^2(D)}^2+e^{-c /h}\| A\widetilde \alpha\|_{L^2}^2),
$$
and thus,
$$
\| M_A\alpha \|^2 ={\mathcal O}(\| A\widetilde \alpha\|_{L^2}^2)={\mathcal O}(\| A\|^2\cdot\|\widetilde \alpha\|_{L^2}^2),
$$
and the result follows by observing that (using the decay properties of the $u_k^{\pm\theta}$'s and (\ref{quasiorth}) again),
$$
\|\widetilde \alpha\|_{L^2}={\mathcal O}(\|\widetilde \alpha\|_{L^2(D)}+e^{-c/h}\|\alpha\|_{\C^m})={\mathcal O}(\|\alpha\|_{\C^m}).
$$

{\bf Acknowledgements} A. Martinez was partially supported by the University of Toulon. He is also indebted to the Centre de Physique Th\'eorique de Marseille, where most of this work was done, for his warm hospitality in February and March 2015.
%%%%%%%%%%%%%%%%%%%%%%%%%%%%%%%%%%%%%%%%%%%%%%%%%%%%%%%%%%%%%%%%%

%\section{Distortion}\label{appA}

{}

\end{document}